\def\RR{\rm \hbox{I\kern-.2em\hbox{R}}}
\def\NN{\rm \hbox{I\kern-.2em\hbox{N}}}
\def\ZZ{\rm {{\rm Z}\kern-.28em{\rm Z}}}
\def\CC{\rm \hbox{C\kern -.5em {\raise .32ex \hbox{$\scriptscriptstyle
|$}}\kern
-.22em{\raise .6ex \hbox{$\scriptscriptstyle |$}}\kern .4em}}
\def\vp{\varphi}
\def\<{\langle}
\def\>{\rangle}
\def\t{\tilde}
\def\e{\varepsilon}
\newcommand{\bX}{\mathbf{X}}
\newcommand{\bG}{\mathbf{G}}
\newcommand{\bM}{\mathbf{M}}
\newcommand{\bI}{\mathbf{I}}
\newcommand{\bc}{\mathbf{c}}
\newcommand{\bd}{\mathbf{d}}
\def\R{\mathbb{R}}
\def\N{\mathbb{N}}
\def\E{\mathbb{E}}
\def\Chi{\raise .3ex
\hbox{\large $\chi$}} \def\vp{\varphi}
\def\lsima{\hbox{\kern -.6em\raisebox{-1ex}{$~\stackrel{\textstyle<}{\sim}~$}}\kern -.4em}
\def\lsim{\hbox{\kern -.2em\raisebox{-1ex}{$~\stackrel{\textstyle<}{\sim}~$}}\kern -.2em}
\def\gsim{\hbox{\kern -.2em\raisebox{-1ex}{$~\stackrel{\textstyle>}{\sim}~$}}\kern -.2em}
\def\({\Bigl (}
\def\){\Bigr )}
\newcommand{\be}{\begin{equation}}
\newcommand{\ee}{\end{equation}}
\newcommand{\bea}{$$ \begin{array}{lll}}
\newcommand{\eea}{\end{array} $$}
\newcommand{\bi}{\begin{itemize}}
\newcommand{\ei}{\end{itemize}}
\newcommand{\iref}[1]{(\ref{#1})}
\newtheorem{theorem}{Theorem}[section]
\newtheorem{lemma}[theorem]{Lemma}
\theoremstyle{definition}
\newtheorem{remark}[theorem]{Remark}
\numberwithin{equation}{section}
\algrenewcommand\algorithmicrequire{\textbf{input:}}
\algrenewcommand\algorithmicensure{\textbf{output:}}
\providecommand{\abs}[1]{\lvert#1\rvert}
\providecommand{\norm}[1]{\lVert#1\rVert}
\providecommand{\ceil}[1]{\lceil#1\rceil}
\newcommand{\EE}{\mathbb{E}}
\DeclareMathOperator{\Prob}{Pr}
\title{\Large{\textbf{Sequential sampling for optimal weighted least\\[2pt]  squares approximations in hierarchical spaces}}\thanks{Benjamin Arras is supported by the European Research Council under grant ERC AdG 338977 BREAD.
Markus Bachmayr acknowledges support by the Hausdorff Center of Mathematics, University of Bonn.
Albert Cohen is supported by the Institut Universitaire de France and
by the European Research Council under grant ERC AdG 338977 BREAD.
}}
\author{Benjamin Arras\thanks{Sorbonne Universit\'es, UPMC Univ Paris 06, CNRS, UMR 7598, Laboratoire Jacques-Louis Lions, 4 place Jussieu, 75005 Paris, France (arras@ljll.math.upmc.fr)}, \ Markus Bachmayr\thanks{Hausdorff Center for Mathematics \& Institute for Numerical Simulation, Wegelerstr.\ 6, 53115 Bonn, Germany (bachmayr@ins.uni-bonn.de)} \ and Albert Cohen\thanks{Sorbonne Universit\'es, UPMC Univ Paris 06, CNRS, UMR 7598, Laboratoire Jacques-Louis Lions, 4 place Jussieu, 75005 Paris, France (cohen@ljll.math.upmc.fr)}}
\date{\today}
\begin{document}
\maketitle
\begin{abstract}
We consider the problem of approximating an unknown function $u\in L^2(D,\rho)$
from its evaluations at given sampling points $x^1,\dots,x^n\in D$, where $D\subset \R^d$ is a general
domain and $\rho$ a probability measure. The approximation is picked in a
linear space $V_m$ where $m=\dim(V_m)$ and computed by a weighted least squares method. Recent
results show the advantages of picking the sampling points at random
according to a well-chosen probability measure $\mu$ that depends both on $V_m$ and $\rho$.
With such a random design, the weighted least squares approximation is proved to be 
stable with high probability, and having precision comparable to that of the exact 
$L^2(D,\rho)$-orthonormal projection onto $V_m$,
in a near-linear sampling regime $n\sim{m\log m}$. The present paper is motivated
by the adaptive approximation context, in which one typically generates 
a nested sequence of spaces $(V_m)_{m\geq1}$ with increasing dimension. 
Although the measure $\mu=\mu_m$ changes with $V_m$, it is possible to recycle
the previously generated samples by interpreting $\mu_m$ as a mixture
between $\mu_{m-1}$ and an update measure $\sigma_m$. Based on this observation,
we discuss sequential sampling algorithms that maintain the stability
and approximation properties uniformly over all spaces $V_m$. Our main result
is that the total number of computed sample at step $m$ remains of the order $m\log{m}$
with high probability. Numerical experiments confirm this analysis.

\medskip\noindent
\emph{MSC 2010:} 41A10, 41A65, 62E17, 65C50, 93E24
\end{abstract}

\section{Introduction}

Least squares approximations are ubiquitously used in numerical computation
when trying to reconstruct an unknown function $u$ defined on some domain $D\subseteq \R^d$
from its observations $y^1,\dots,y^n$ at 
a limited amount of points $x^1,\dots,x^n\in D$. In its simplest form the method amounts to minimizing
the least squares fit
\be
\frac 1 n\sum_{i=1}^n |y^i-v(x^i)|^2,
\ee
over a set of functions $v$ that are subject to certain constraints expressing a prior on the
unknown function $u$. There are two classical approaches for imposing such constraints:
\begin{itemize}
\item[(i)] Add a penalty term $P(v)$ to the least squares fit. Classical instances include
norms of reproducing kernel Hilbert spaces or $\ell^1$ norms that promote sparsity of $v$
when expressed in a certain basis of functions. 
\item[(ii)] Limit the search of $v$ to a space $V_m$ of finite dimension $m\leq n$. Classical instances
include spaces of algebraic or trigonometric polynomials, wavelets, or splines.
\end{itemize}
The present paper is concerned with the second approach, in which approximability 
by the space $V_m$ may be viewed as a prior on the unknown function.
We measure accuracy in the Hilbertian norm
\be
\|v\|=\biggl(\int_D |v(x)|^2 d\rho\biggr)^{1/2}=\|v\|_{L^2(D,\rho)},
\ee
where $\rho$ is a probability measure over $D$. We denote by $\<\cdot,\cdot\>$ the associated inner product.
The error of best approximation is defined by
\be
e_m(u):=\min_{v\in V_m} \|u-v\|,
\ee
and is attained by $P_mu$, the $L^2(D,\rho)$-orthogonal projection of $u$ onto $V_m$. Since the least squares
approximation $\t u$ is picked in $V_m$, it is natural to compare $\|u-\t u\|$ with $e_m(u)$. In particular, the method
is said to be near-optimal (or instance optimal with constant $C$) if the comparison
\be
\|u-\t u\|\leq Ce_m(u),
\ee 
holds for all $u$, where $C>1$ is some fixed constant. 

The present paper is motivated by applications where the sampling points $x^i$ are not prescribed 
and can be chosen by the user. Such a situation typically occurs when
evaluation of $u$ is performed either by a computer simulation or a physical experiment,
depending on a vector of input parameters $x$ that can be set by the user. This evaluation is typically
costly and the goal is to obtain a satisfactory {\it surrogate model} $\t u$ from a minimal number of evaluations
\be
y^i=u(x^i).
\ee
For a given probability measure $\rho$ and approximation space $V_m$ of interest, a relevant question
is therefore whether instance optimality can be achieved with sample size $n$ that is moderate, ideally linear in $m$. 

Recent results of \cite{Do1,JNZ} for polynomial spaces and \cite{CM1} in 
a general approximation setting show that this objective can be achieved 
by certain random sampling schemes in the
more general framework of {\em weighted least squares} methods. The approximation $\t u$
is then defined as the solution to
\be
\min_{v\in V_m} \frac 1 n\sum_{i=1}^n w(x_i)  |y^i-v(x^i)|^2,
\label{wls}
\ee
where $w$ is a positive function and the $x^i$ are independently drawn according to 
a probability measure $\mu$, that satisfy the constraint
\be
w \,d\mu= d\rho.
\label{cons}
\ee
The choice of a sampling measure $\mu$ that differs from the error norm measure $\rho$
appears to be critical in order to obtain instance optimal approximations 
with an optimal sampling budget.

In particular, it is shown in \cite{Do1,CM1}, that
there exists an optimal choice of $(\rho,\mu)$ such that the weighted least squares
is stable with high probability and instance optimal in expectation, under 
the near-linear regime $n\sim m$ up to logarithmic factors. The optimal sampling measure and weights
are given by
\be
d\mu_m=\frac {k_m}{m} d\rho \quad {\rm and}\quad w_m=\frac m{k_m},
\label{optimeasure}
\ee
where $k_m$ is the so-called Christoffel function defined by
\be
k_m(x)=\sum_{j=1}^m |\vp_j(x)|^2,
\ee
with $\{\vp_1,\dots,\vp_m\}$ any $L^2(D,\rho)$-orthonormal basis of $V_m$. 

In many practical applications, the space $V_m$ is picked within a 
family $(V_m)_{m\geq 1}$ that has the nestedness property
\be\label{nestedspaces}
V_1\subset V_2\subset \cdots
\ee
and accuracy is improved by raising the dimension $m$. The sequence  $(V_m)_{m\geq 1}$ may
either be a priori defined, or adaptively generated, which means
that the way $V_m$ is refined into $V_{m+1}$ may depend on the result of the least squares computation.
Example of such hierarchical adaptive or non-adaptive schemes include in particular:
\begin{itemize} 
\item[(i)] Mesh refinement in low-dimension performed by progressive addition 
of hierarchical basis functions or wavelets, which is relevant for approximating piecewise smooth functions, 
such as images or shock profiles, see \cite{Co,Da,De}.
\item[(ii)] Sparse polynomial approximation in high dimension, which is relevant for 
the treatment of certain parametric and stochastic PDEs, see \cite{CD}.
\end{itemize}
In this setting, we are facing the difficulty that the optimal measure $\mu_m$
defined by \iref{optimeasure} varies together with $m$. 

In order to maintain an optimal sampling budget,
one should avoid the option of drawing a new sample 
\be
S_m=\{x^1_m,\dots,x^n_m\}
\ee
of increasing
size $n=n(m)$ at each step $m$. In the particular case where $V_m$ are the univariate polynomials of degree $m-1$,
and $\rho$ a Jacobi type measure on $[-1,1]$, it is known that $\mu_m$ converges weakly to the equilibrium measure
defined by
\be
d\mu^*(y)=\frac {dy}{\pi\sqrt{1-y^2}},
\ee
with a uniform equivalence
\be
c_1\mu^* \leq \mu_m\leq c_2\mu^*,
\label{equivmeas}
\ee
see \cite{EMN,MT}. This suggests the option of replacing all $\mu_m$ by the single $\mu^*$, as studied in
\cite{JNZ}. Unfortunately,
such an asymptotic behaviour is not encountered for most general choices of spaces $(V_m)_{m\geq 1}$. An equivalence
of the form \iref{equivmeas} was proved in \cite{HNTW} for
sparse multivariate polynomials, however with a ratio $c_2/c_1$ that increases exponentially with the dimension,
which theoretically impacts in a similar manner the sampling budget needed for stability.

In this paper, we discuss sampling strategies that are based on the observation that the optimal measure $\mu_m$ 
enjoys the mixture property
\be
\mu_{m+1}=\(1-\frac 1 {m+1}\) \mu_{m}+\frac 1 {m+1} \sigma_{m+1}, \quad {\rm where} \quad d\sigma_m:=|\vp_m|^2 d\rho.
\label{mixt}
\ee
As noticed in \cite{Do3}, this leads naturally to sequential sampling strategies, where the sample $S_m$
is recycled for generating $S_{m+1}$. The main contribution of this paper is to analyze such a sampling strategy
and prove that the two following properties can be jointly achieved in an expectation or high probability sense:
\begin{enumerate}
\item
Stability and instance optimality of weighted least squares hold uniformly over all $m\geq 1$.
\item
The total sampling budget after $m$ step is linear in $m$ up to logarithmic factors.
\end{enumerate}

The rest of the paper is organized as follows. We recall in \S 2 stability and approximation estimates
from \cite{Do1,CM1} concerning the weighted least squares method in a fixed space $V_m$. 
We then describe in \S 3 a sequential sampling strategy based on \iref{mixt} and establish the 
above optimality properties 1) and 2). These optimality properties are numerically illustrated
in \S 4 for the algorithm and two of its variants. 

\section{Optimal weighted least squares}

We denote by $\|\cdot\|_n$ the discrete Euclidean norm defined by
\be
\|v\|_n^2:=\frac 1 n \sum_{i=1}^n w(x^i) |v(x^i)|^2,
\ee
and by $\<\cdot,\cdot\>_n$ the associated inner product.
The solution $\t u\in V_m$ to \iref{wls} may be thought of as an orthogonal projection
of $u$ onto $V_m$ for this norm. Expanding
\be
\t u=\sum_{j=1}^m c_j\vp_j,
\ee
in the basis $\{\vp_1,\dots,\vp_m\}$ of $V_m$, the coefficient vector $\bc=(c_1,\dots,c_m)^T$ is solution to
the linear system
\be
\bG_m \bc = \bd,
\ee
where $\bG_m$ is the Gramian matrix for the inner product $\<\cdot,\cdot\>_n$ with entries
\be\label{gramiandef}
\bG_{j,k}:=\<\vp_j,\vp_k\>_n=\frac 1 n \sum_{i=1}^n w(x^i)\vp_j(x^i)\vp_k(x^i),
\ee
and the vector $\bd$ has entries $\bd_k=\frac 1 n\sum_{i=1}^n w(x^i)y^i\vp_k(x^i)$. The solution $\bc$ always exists
and is unique when $\bG_m$ is invertible. 

Since $x^1,\dots,x^n$ are drawn independently according to $d\mu$, the relation \iref{cons} implies
that $\E(\<v_1,v_2\>_n)=\<v_1,v_2\>$, and in particular 
\be
\E(\bG_m)=\bI.
\ee
The stability and accuracy analysis of the weighted least squares method can be related to 
the amount of deviation between $\bG_m$ and its expectation $\bI$ measured in the spectral norm. Recall that for $m\times m$ matrices $\bM$, this norm is defined as $\|\bM\|_2=\sup_{\|v\|_2=1} \|\bM v\|_2$.
This deviation also describe the closeness of the norms $\|\cdot\|$ and $\|\cdot\|_n$ over the space $V_m$, since
one has
\be
\|\bG_m-\bI\|_2\leq \delta\quad  \iff  \quad  (1-\delta)\|v\|^2 \leq \|v\|_n^2\leq (1+\delta)\|v\|^2,\quad v\in V_m.
\ee
Note that this closeness also implies a bound 
\be
\kappa(\bG_m)\leq \frac {1+\delta}{1-\delta},
\ee
on the condition number of $\bG_m$. 

Following \cite{CM1}, we use the particular value $\delta=\frac 1 2$,
and define $\t u$ as the solution to \iref{wls} when $\|\bG_m-\bI\|_2\leq \frac 1 2$ and set $\t u=0$ otherwise.
The probability of the latter event can be estimated by a matrix tail bound, noting that
\be
\bG_m=\frac 1 n \sum_{i=1}^n \bX^i,
\ee 
where the $\bX^i$ are $n$ independent realizations of the rank one
random matrix 
\be
\bX:=w(x)(\vp_j(x)\vp_k(x))_{j,k=1,\dots,m},
\ee
where $x$ is distributed according to $\mu$. The matrix Chernoff bound, see Theorem 1.1 in \cite{Tr}, gives
\be
\Prob \( \|\bG_m-\bI\|_2\geq \frac 1 2\)\leq 2m\exp(-\gamma n/K), \quad \gamma:=\frac 1 2(1-\ln 2).
\label{cher}
\ee
where $K:=\|w k_m\|_{L^\infty}$
is an almost sure bound for $\|\bX\|$. Since $\int wk_md\mu=\int k_m d\rho=m$,
it follows that $K$ is always larger than $m$.
With the choice $\mu=\mu_m$ given by \iref{optimeasure} for the 
sampling measure, one has exactly $K=m$, which leads to the following result.

\begin{theorem}\label{thm:ncond}
Assume that the sampling measure and weight function are given by \iref{optimeasure}. Then
for any $0<\e<1$, the condition
\be\label{ncondition}
n\geq  cm (\ln(2m)-\ln(\e)), \quad c:=\gamma^{-1}=\frac 2{1-\ln 2},
\ee
implies the following stability and instance optimality properties:
\be 
\Prob \( \|\bG_m-\bI\|_2\geq \frac 1 2\)\leq \e,\quad m\geq 1.
\label{prob}
\ee
\end{theorem}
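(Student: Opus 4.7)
The plan is to show that the statement is essentially a direct corollary of the matrix Chernoff bound \iref{cher}, which the excerpt has already established, once we insert the specific value of the uniform bound $K=\|wk_m\|_{L^\infty}$ obtained from the choice \iref{optimeasure}.

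First I would verify that the optimal pair $(\mu_m,w_m)$ given by \iref{optimeasure} satisfies the constraint \iref{cons}: indeed $w_m\,d\mu_m = \frac{m}{k_m}\cdot\frac{k_m}{m}d\rho = d\rho$, so the setup of \S 2 applies, $\E(\bG_m)=\bI$, and the bound \iref{cher} is valid. Next I would evaluate the almost sure bound $K$ on the spectral norm of the rank-one matrix $\bX = w_m(x)(\vp_j(x)\vp_k(x))_{j,k}$. Since $\bX$ is rank one with $\|\bX\|_2 = w_m(x)\sum_{j=1}^m |\vp_j(x)|^2 = w_m(x)k_m(x)$, the optimal weight $w_m = m/k_m$ gives $w_m(x)k_m(x) \equiv m$ pointwise, hence $K = m$ exactly. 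This is the crucial simplification that the optimal design provides (and one sees that $K\geq m$ for any admissible choice via the average $\int wk_m\,d\mu = m$).

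Substituting $K=m$ into \iref{cher} yields
\be
\Prob\Bigl(\|\bG_m-\bI\|_2\geq \tfrac 1 2\Bigr)\leq 2m\exp\!\bigl(-\gamma n/m\bigr).
\ee
To conclude \iref{prob} it now suffices to impose $2m\exp(-\gamma n/m)\leq \e$, which after taking logarithms is equivalent to $\gamma n/m \geq \ln(2m)-\ln(\e)$, i.e.\ $n\geq \gamma^{-1} m(\ln(2m)-\ln(\e))$. This is precisely the hypothesis \iref{ncondition} with $c=\gamma^{-1} = 2/(1-\ln 2)$, so the desired tail estimate follows.

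There is no real obstacle here: the only non-trivial point is recognizing that with the Christoffel-based weight one achieves the best possible value $K=m$, which is what ultimately yields the near-linear sampling regime $n\sim m\log m$. Everything else is an algebraic manipulation of the Chernoff tail. If one wanted to be fully self-contained, the main work would lie in reproving \iref{cher} itself, but since the excerpt cites it from \cite{Tr} and presents it as a known tool, the proof of the theorem reduces to the short computation outlined above.
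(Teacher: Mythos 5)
Your proposal is correct and follows exactly the paper's own route: the theorem is stated right after the matrix Chernoff bound \iref{cher} precisely because, as you observe, the optimal choice \iref{optimeasure} gives $w_m k_m\equiv m$ so that $K=m$, and the condition \iref{ncondition} is just what makes $2m\exp(-\gamma n/m)\leq\e$. Nothing is missing.
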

\noindent
The probabilistic inequality \iref{prob} induces an instance optimality estimate in expectation, since
in the event where $\|\bG_m-\bI\|_2\leq \frac 1 2$, one has
\be
\begin{aligned}
\|u-\t u\|^2 &=\|u-P_mu\|^2+\|\t u-P_m u\|^2 \\
& \leq e_m(u)^2+ 2\|\t u-P_m u\|_n^2 \leq e_m(u)^2+2\|u-P_mu\|_n^2,
\end{aligned}
\ee
and therefore
\be
\E(\|u-\t u\|^2)\leq e_m(u)^2+2\E(\|u-P_mu\|_n^2)+\e \|u\|^2=3 e_m(u)^2 +\e \|u\|^2.
\label{exp}
\ee
By a more refined reasoning, the constant $3$ can be replaced by $1+\frac {c}{\ln(2m)-\ln(\e)}$ which
tends to $1$ as $m/\e$ becomes large, see \cite{CM1}.

In summary, when using the optimal sampling measure $\mu_m$ defined
by \iref{optimeasure}, stability and instance optimality can be achieved in the 
near linear regime
\be
\label{nmdef}
n= n_\varepsilon(m) := \ceil{ c\, m\, (\ln (2m) - \ln \e )},
\ee 
where $\varepsilon\!\in\,]0,1[$ controls the probability of failure. To simplify notation later, we set $n_\e(0) := 0$.

\section{An optimal sequential sampling procedure}\label{sec:seq}

In the following analysis of a sequential sampling scheme, we assume a sequence of nested spaces $V_1 \subset V_2 \subset \cdots$ and corresponding basis functions $\vp_1,\vp_2,\ldots$ to be given such that for each $m$,
$\{\vp_1,\dots,\vp_m\}$ is an orthonormal basis of $V_m$. In practical applications,
such spaces may either be fixed in advance, or adaptively selected, that is,
 the choice of $\vp_{m+1}$ depends on the computation of the weighted least squares approximation \eqref{wls} for $V_m$.
 In view of the previous result, one natural objective is to genererate sequences of samples $(S_m)_{m\geq 1}$ 
 distributed according to the different measures $(\mu_m)_{m\geq 1}$, with
 \be
 \#(S_m)=n_\e(m),
 \ee
 for some prescribed $\e>0$.

The simplest option for generating such sequences would be directly drawing samples from $\mu_m$ for each $m=1,2,\dots$ separately. Since we ask that 
$n_\e(m)$ is proportional to $m$ up to logarithmic factors, this leads to a total cost $C_m$
after $m$ step given by
\be
C_m=\sum_{k=1}^m \#(S_k),
\ee
which increases faster than quadratically with $m$.
Instead, we want to recycle the existing samples $S_m$ in generating $S_{m+1}$ to arrive at a scheme
such that the total cost $C_m$ remains comparable to $n_\e(m)$, that is, close to linear in $m$.

To this end, we use the mixture property \iref{mixt}.
In what follows, we assume a procedure for sampling from each update measure $d\sigma_{j} := \abs{\vp_j}^2 d\rho$ to be available. In the univariate case, standard methods are inversion transform sampling or rejection sampling.
These methods may in turn serve in the multivariate case when the $\vp_j$ are tensor product basis functions on a product domain 
and $\rho$ is itself of tensor product type, since $\sigma_j$ are then product measures that can be sampled via their univariate factor measures.
We first observe that, in order to draw $x$ distributed according to $\mu_m$, for some fixed $m$, we can proceed as follows:
\be\label{mixturedirect}~
\text{Draw $j$ uniformly distributed in $\{1,\ldots, m\}$, then draw $x$ from $\sigma_j$.}
\ee
Let now $(n(m))_{m\geq 1}$ be an increasing sequence representing the prescribed size of the samples $(S_m)_{m\geq 1}$.
Suppose that we are given $S_m=\{x^1_m ,\ldots, x^{n(m)}_m\}$ i.i.d. according to $\mu_{m}$.
In order to obtain the new sample $S_{m+1}=\{x^1_{m+1} ,\ldots, x^{n(m+1)}_{m+1}\}$  i.i.d. according to  $\mu_{m+1}$, we can proceed as stated in
the following Algorithm \ref{mixturestep}.

\begin{algorithm}~
\caption{Sequential sampling}\label{mixturestep}
\begin{algorithmic}
\vspace{-6pt}
\Require sample $S_m = \{ x_m^1,\ldots, x_m^{n(m)}\}$ from $\mu_m$
\Ensure sample $S_{m+1} = \{ x_{m+1}^1,\ldots, x_{m+1}^{n(m+1)}\}$ from $\mu_{m+1}$
\vspace{3pt}
\hrule
\vspace{6pt}

\For{$i=1,\ldots,n(m)$}
\State draw $a_i$ uniformly distributed in $\{ 1, \ldots, m+1 \}$
\If{$a_i = m+1$}
\State draw $x^i_{m+1}$ from $\sigma_{m+1}$
\Else
\State set $x^i_{m+1} := x^i_m$
\EndIf
\EndFor

\For{$i=n(m)+1, \ldots, n(m+1)$}
\State draw $x^i_{m+1}$ from $\mu_{m+1}$ by \eqref{mixturedirect}
\EndFor
\end{algorithmic}
\end{algorithm}

Algorithm \ref{mixturestep} requires a fixed number $n(m+1) - n(m)$ of samples from $\mu_{m+1}$ and an additional number $\t n(m)$ of samples from $\sigma_{m+1}$. The latter is a random variable that can be expressed as 
\be
\t n(m) = \sum_{i=1}^{n(m)} b^i_{m+1},
\ee
where for each fixed $m\geq 1$, the $(b^i_m)_{i=1,\ldots, n(m)}$ are i.i.d.\ Bernoulli random variables with $\Prob(b^i_m = 1) = \frac1{m}$. Moreover, $\{b^i_m \,\colon\: i=1,\ldots, n(m), \,m\geq 1\}$ is a collection of independent random variables. This immediately gives an expression for the total cost $C_m$ after $m$ successive applications of Algorithm \ref{mixturestep}, beginning with $n(1)$ samples from $\mu_1$,
as the random variable
\be\label{Cmdef}
 C_m := n(m) + s(m), \quad s(m) := \sum_{k=1}^{m-1} \t n(k) = \sum_{k=1}^{m-1} \sum_{i=1}^{n(k)} b^i_{k+1} .
\ee
We now focus on the particular choice
\be
\label{defstdn}
n(m):=n_\e(m),
\ee
as in \iref{nmdef}, for a prescribed $\e\in ]0,1[$. This particular choice ensures that, for all $m\geq 1$,
\be
\label{thm1prob}
\Prob \bigl( \norm{\bG_m - \bI}_2 \geq  \textstyle\frac12\displaystyle \bigr) \leq \varepsilon,
\ee
where $\bG_m$ denotes the Gramian for $V_m$ according to \eqref{gramiandef}.

We first estimate $\EE(s(m))$ for this choice of $n(m)$. For this purpose, we note that $\EE(\t n(k)) = \frac{n(k)}{k+1}$
and  use the following lemma.

\begin{lemma}\label{Smest}
For $m \geq 1$ and $\e>0$,
\be\label{eq:expectationbounds}
 \textstyle\frac12\displaystyle n(m) - 2c   \leq  \sum_{k=1}^{m} \frac{ n(k) }{k+1} \leq  n(m)  +  1 .
\ee
\end{lemma}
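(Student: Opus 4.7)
My approach is to prove the upper and lower bounds by different methods: induction for the upper bound, and a telescoping identity for the lower one.

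For the upper bound, I would actually establish the stronger inequality $\sum_{k=1}^m n(k)/(k+1) \leq n(m)$ by induction on $m$. Writing $g(k) := ck\ln(2k/\e)$ so that $g(k) \leq n(k) < g(k)+1$, the base case $m=1$ amounts to $n(1)/2 \leq n(1)$. For the inductive step, the inductive hypothesis reduces the problem to showing $(m+1)n(m-1) \leq m\,n(m)$, which using the above bounds on $n$ in terms of $g$ becomes
\[
cm^2\ln\!\left(\tfrac{m}{m-1}\right) + c\ln\!\left(\tfrac{2(m-1)}{\e}\right) \geq m+1.
\]
Applying $\ln(1+x) \geq x/(1+x)$ at $x = 1/(m-1)$ gives $m\ln(m/(m-1)) \geq 1$, so $cm^2\ln(m/(m-1)) \geq cm$. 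Since $c = 2/(1-\ln 2) > 2$, we have $cm \geq m+1$, and the second term on the left is non-negative for $\e \in (0,1)$ and $m \geq 2$, completing the induction.

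For the lower bound, I would introduce
\[
\phi(m) := \sum_{k=1}^m \frac{n(k)}{k+1} - \frac{n(m)}{2},
\]
so that $\phi(1) = 0$ and, by direct computation,
\[
\phi(m+1) - \phi(m) = \frac{(m+2)\,n(m) - m\,n(m+1)}{2(m+2)}.
\]
Substituting $n(m) \geq g(m)$ and $n(m+1) \leq g(m+1)+1$, one finds this increment is non-negative whenever $\ln(2m/\e) \geq 1 + 1/m + 1/c$. For $\e \in (0,1)$ this sufficient condition holds for every $m \geq 3$, and the only possibly ``bad'' steps are $m=1$ and $m=2$. At each such bad step, the lower bound $\phi(m+1) - \phi(m) \geq -[c(m+1)+m]/[2(m+2)]$ holds, and summing these at most two contributions gives a total negative decrement strictly less than $2c$. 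Hence $\phi(m) \geq -2c$ for every $m \geq 1$, which is equivalent to the desired inequality.

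The main obstacle is the lower bound: a direct integral or Stirling-type estimate of $\sum g(k)/(k+1)$ incurs corrections of order $cm$ that overwhelm the target $n(m)/2$ for large $m$. Isolating the eventual monotonicity of $\phi$ and carefully enumerating the finitely many bad initial steps is what makes the $-2c$ offset sharp enough, with the constant $2c$ calibrated precisely to absorb these decrements.
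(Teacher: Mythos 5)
Your argument is correct, but it proceeds quite differently from the paper's. The paper proves the upper bound by summing termwise with $\frac{k}{k+1}\le 1$ and invoking the Stirling bound $\ln m!\le m(\ln m-1)+\frac12\ln m+1$, and the lower bound by combining $\frac{k}{k+1}\ge\frac12$ with an integral comparison and integration by parts for $\sum_{k}\frac{k}{k+1}\ln k$, leaving a residual $R(m)>-2$ that is checked for small $m$ and by monotonicity. You instead telescope: the upper bound becomes the single two-term inequality $(m+1)n(m-1)\le m\,n(m)$ handled by induction via $\ln(1+x)\ge x/(1+x)$, and the lower bound becomes monotonicity of $\phi(m)=\sum_{k\le m}n(k)/(k+1)-n(m)/2$ for $m\ge 3$ via $\ln(1+x)\le x$, with the two possibly decreasing initial increments bounded below and summing to about $\frac{17c+10}{24}\approx 5$, comfortably within the $-2c$ allowance (so the constant is not as tightly ``calibrated'' as you suggest, but the bound certainly holds). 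I verified both reductions and the numerical checks; the argument is sound, and like the paper's proof it implicitly uses $\e<1$ (or at least $\e\le 2$) so that $\ln(2k/\e)\ge 0$, which is harmless since the lemma is only applied with $\e\in\,]0,1[$. Your route is more elementary --- no Stirling formula, no integrals --- and even yields the slightly sharper upper bound $\sum_{k=1}^m n(k)/(k+1)\le n(m)$, while making transparent exactly where the $-2c$ loss originates. The paper's global estimates have the practical advantage that the intermediate bounds \eqref{upper1}, \eqref{upper2}, \eqref{lower1}, \eqref{lower2} are reused verbatim in the subsequent lemma for the $m$-dependent failure probabilities $\e(m)$, where the extra $\ln k^2$ term is absorbed by the same Stirling estimate; your induction would need to be rerun (with new small-$m$ checks) for that variant.
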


\begin{proof}
For the upper bound, we note that $n(k) \leq  c k ( \ln (2k)  - {\ln \e }) + 1$ and
\be\label{upper1}\begin{aligned} 
  \sum_{k=1}^{m} \frac1{k+1} \bigl( c k (\ln (2k) - {\ln\e})  + 1 \bigr)
    &\leq c \sum_{k=1}^{m} \bigl(  \ln(2k) - {\ln \e} \bigr) + \sum_{k=1}^{m} \frac1{k+1} \\
     & \leq c \bigl( m \ln 2  +  \ln m! - m {\ln\e}\bigr) + \ln (m+1),
     \end{aligned}
\ee
By the Stirling bound $k ! \leq e k^{k+\frac12} e^{-k}$ for $k \geq 1$,
\be\label{stirling}
   \ln m! \leq m \bigl( \ln m  - 1 \bigr) +\textstyle \frac12 \displaystyle \ln m  + 1.
\ee
Combining this with \eqref{upper1} gives
\be\label{upper2}
\begin{aligned}
  \sum_{k=1}^{m} \frac{ n(k) }{k+1} &\leq c  m \bigl(  \ln 2 m  - {\ln \e} \bigr) - c (m-1) + \frac{c}2 \ln  m +\ln (m+1) \\
    & \leq n(m) + 1,
  \end{aligned}
\ee
where the inequality $- c (m-1) + \frac{c}2 \ln  m +\ln (m+1) \leq 1$ for $m\geq 1$ is verified for the choice of $c$ in \eqref{ncondition} by direct evaluation for $m=1$ and monotonicity. For the lower bound, we estimate
\be
   \sum_{k=1}^{m} \frac{ n(k) }{k+1} \geq   \frac{c m}{2} (  \ln 2 - {\ln\e})  +  c \sum_{k=1}^{m} \frac{k}{k+1} \ln k .
\ee
Using monotonicity and integration by parts with $\frac{d}{dx}\left(x-\ln(1+x)\right)=\frac{x}{x+1}$, we obtain
\be\label{sumintlowerbound}
\begin{aligned}
  \sum_{k=1}^{m} \frac{k}{k+1} \ln k & \geq \int_1^{m} \frac{x}{x+1} \ln x \,dx \\
   &= - \int_1^{m} \frac{x-\ln(x+1)}{x}\,dx + \bigl[ (x - \ln(x+1)) \ln x \bigr]_1^{m}  \\
   & = - (m- 1) + \int_1^{m} \frac{\ln (x+1)}{x}\,dx  + (m-\ln (m+1)) \ln m .   \\
\end{aligned}\ee
Moreover,
\be
   \int_1^{m} \frac{\ln (x+1)}{x}\,dx = \int_0^{\ln m} \ln (1 + e^t) \,dt \geq \int_0^{\ln m} t\,dt = \frac12 \ln^2m ,
\ee
and using this in \eqref{sumintlowerbound} gives
\be\label{lower1}
  \sum_{k=1}^{m} \frac{k}{k+1} \ln k \geq \frac12 m \ln  m + \frac1{2c} + R(m) ,
\ee
where
\be\label{lower2}
  R(m) =  \frac12 m (\ln m - 2)  - \ln (m+1) \ln m + \frac12 \ln^2 m  + 1  - \frac1{2c} > -2.
\ee
The latter inequality can be directly verified for the first few values of $m$ and then follows for $m\geq 1$ by monotonicity.
Thus, we obtain
\be\label{lower3}
   \sum_{k=1}^{m} \frac{ n(k) }{k+1} \geq  \textstyle \frac{1}2\displaystyle \bigl[ c m \bigl( \ln 2m  - {\ln \e}\bigr)  + 1 \bigr] -2c \geq \textstyle\frac12\displaystyle n(m) - 2c,
\ee
which concludes the proof of the lemma.
\end{proof}
\noindent
As an immediate consequence, we obtain an estimate of the total cost in expectation by
\begin{theorem}\label{thm:mean}
With $n(m)=n_\e(m)$, the total cost after $m$ steps of Algorithm 1 satisfies
\be\label{costexpectationbound}
 n(m) + \textstyle \frac 1 2 \displaystyle n(m-1) - 2c\leq  \EE(C_m) \leq n(m) + n(m-1) + 1.
\ee
\end{theorem}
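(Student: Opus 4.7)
The plan is to reduce the theorem to a routine expectation computation followed by an application of Lemma \ref{Smest}. Starting from the decomposition $C_m = n(m) + s(m)$ in \eqref{Cmdef}, since $n(m)$ is deterministic I only need to handle $\mathbb{E}(s(m))$. Writing
$$ s(m) = \sum_{k=1}^{m-1}\sum_{i=1}^{n(k)} b^i_{k+1}, $$
and recalling that the $b^i_{k+1}$ are independent Bernoulli variables with mean $1/(k+1)$ while the indices of summation are deterministic, linearity of expectation gives
$$ \mathbb{E}(s(m)) = \sum_{k=1}^{m-1} \frac{n(k)}{k+1}. $$

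Next, I would apply Lemma \ref{Smest} with $m$ replaced by $m-1$ in its statement, which yields
$$ \tfrac12 n(m-1) - 2c \leq \sum_{k=1}^{m-1} \frac{n(k)}{k+1} \leq n(m-1) + 1. $$
Adding the deterministic quantity $n(m)$ to all three sides gives \eqref{costexpectationbound} directly.

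The only point requiring a small check is the boundary case $m=1$: there $s(1)$ is an empty sum, and the convention $n_\varepsilon(0) := 0$ fixed right after \eqref{nmdef} must be invoked so that \eqref{costexpectationbound} reduces to the trivial chain $n(1) - 2c \leq n(1) \leq n(1) + 1$. I do not anticipate any genuine obstacle in this argument: the combinatorial and analytic work has already been absorbed into Lemma \ref{Smest}, and the remaining step is essentially one line of linearity of expectation applied to the independent Bernoulli structure of $\tilde n(k)$.
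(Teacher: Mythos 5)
Your proof is correct and is exactly the argument the paper intends: the paper presents Theorem \ref{thm:mean} as an immediate consequence of Lemma \ref{Smest}, having already noted $\EE(\t n(k)) = \frac{n(k)}{k+1}$, so your computation of $\EE(s(m))$ by linearity and the application of the lemma with $m-1$ in place of $m$ reproduce its reasoning. Your explicit check of the $m=1$ boundary case via the convention $n_\e(0)=0$ is a small point the paper leaves implicit.
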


We next derive estimates for the probability that the upper bound in the above estimate
is exceeded substantially by $C_m$.
The following Chernoff inequality can be found in \cite{AV}. For convenience of the reader, we give a standard short proof following \cite{HR}.

\begin{lemma}\label{chernoffbound}
Let $N\geq 1$, $(p_i)_{1\leq i\leq N}\in [0,1]^N$ and $(X_i)_{1\leq i\leq N}$ be a collection of independent Bernoulli random variables with $\Prob(X_i = 1) = p_i$ for all $1\leq i\leq N$. Set $\bar X = \EE (X_1+\ldots + X_N) = p_1+\ldots+p_N$.
	Then, for all $\tau \geq 0$,
	\[
	 \Prob \bigl( X_1+\ldots+X_N \geq ( 1+ \tau) \bar X \bigr) \leq ( 1+ \tau)^{-(1+\tau)\bar X} e^{\tau \bar X}.
	\]
	In particular, for all $\tau \in [0,1]$,
	\[
	\Prob\bigl( X_1+\ldots+X_N \geq ( 1+ \tau) \bar X \bigr) \leq e^{-\tau^2 \bar X/3}.
	\]
\end{lemma}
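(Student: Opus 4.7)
The plan is to use the standard exponential Markov (Chernoff) argument. First, I fix $t > 0$ and apply Markov's inequality to $e^{t(X_1+\cdots+X_N)}$, which gives
\[
\Prob\bigl( X_1+\cdots+X_N \geq (1+\tau)\bar X \bigr) \leq e^{-t(1+\tau)\bar X}\, \EE\bigl(e^{t(X_1+\cdots+X_N)}\bigr).
\]
By independence, the moment generating function factorizes, and since each $X_i$ is Bernoulli,
\[
\EE\bigl(e^{tX_i}\bigr) = 1 + p_i(e^t-1) \leq \exp\bigl(p_i(e^t-1)\bigr),
\]
using $1+x \leq e^x$. Multiplying these and summing $p_i$ yields $\EE(e^{t\sum_i X_i}) \leq \exp\bigl((e^t-1)\bar X\bigr)$.

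Next, I optimize in $t$ by choosing $t = \ln(1+\tau) \geq 0$, so that $e^t - 1 = \tau$. Substituting gives
\[
\Prob\bigl( X_1+\cdots+X_N \geq (1+\tau)\bar X \bigr) \leq (1+\tau)^{-(1+\tau)\bar X} e^{\tau \bar X},
\]
which is the first inequality.

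For the second inequality, it suffices to show that for $\tau \in [0,1]$,
\[
 g(\tau) := (1+\tau)\ln(1+\tau) - \tau - \frac{\tau^2}{3} \geq 0,
\]
since then $(1+\tau)^{-(1+\tau)\bar X} e^{\tau \bar X} \leq e^{-\tau^2 \bar X/3}$. I would verify this by computing $g(0)=0$, $g'(0)=0$, and checking that $g''(\tau) = \frac{1}{1+\tau} - \frac{2}{3} \geq 0$ on $[0,1]$ (since $\frac{1}{1+\tau} \geq \frac{1}{2} > \frac{2}{3}$ fails at $\tau=1$, so I should be careful; in fact $g'(\tau) = \ln(1+\tau) - \frac{2\tau}{3}$ and one checks $g'(\tau)\geq 0$ on $[0,1]$ because $\ln 2 \approx 0.693 > 2/3$, together with concavity considerations). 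This elementary one-variable inequality is the only slightly delicate step, but it is standard and can be verified by direct analysis of $g'$ on $[0,1]$.

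The main conceptual content is routine: the MGF bound plus the optimal choice $t = \ln(1+\tau)$. The only real obstacle is making the scalar inequality $g(\tau) \geq 0$ airtight on the full interval $[0,1]$; I would carry this out by showing $g'(\tau)\geq 0$ on $[0,1]$ via $g'(0)=0$ and $g''(\tau) \geq 0$ on $[0,1/2]$ combined with a direct check that $g'(1) = \ln 2 - 2/3 > 0$, noting $g'$ is concave so it cannot dip below zero in between.
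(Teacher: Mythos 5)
Your proof is correct and follows essentially the same route as the paper: Markov's inequality applied to the moment generating function, the bound $1+a\leq e^a$, and the choice $t=\ln(1+\tau)$, reducing the second claim to the scalar inequality $\tau-(1+\tau)\ln(1+\tau)\leq -\tau^2/3$ on $[0,1]$. The paper simply asserts that last inequality, whereas you verify it; your final argument (concavity of $g'$ together with $g'(0)=0$ and $g'(1)=\ln 2 - 2/3>0$) is sound, even though your first attempt via $g''\geq 0$ on all of $[0,1]$ would not have worked.
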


\begin{proof}
	For $t\geq 0$ and $Y := X_1+\ldots+X_N$, $ \Prob(Y \geq ( 1 + \tau) \bar X)  \leq e^{-t (1+\tau) \bar X} \EE (e^{t Y})$ by Markov's inequality,
	and using that $1 + a \leq e^a$, $a \in \R$,
\[
		\EE (e^{t Y}) = \prod_{i=1}^N \EE(e^{tX_i}) = \prod_{i=1}^N \bigl( p_i e^t + (1-p_i)\bigr) 
		 \leq \prod_{i=1}^N e^{p_i(e^t-1)} = e^{(e^t-1)\bar X}. 
\]
Now take $t=\ln(1+\tau)$. Moreover, for $\tau \in [0,1]$, one has $\tau - (1+\tau) \ln(1+\tau) \leq - \textstyle\frac13\displaystyle \tau^2$.
\end{proof}
\noindent
We now apply this result to the random part $s(m)$ of the total sampling costs $C_m$ as defined in \eqref{Cmdef},
and obtain the following probabilistic estimate.

\begin{theorem}\label{thm1}
With $n(m)=n_\e(m)$, the total cost after $m$ step of Algorithm 1 satisfies, for any $\tau \in [0,1]$,
 \be
 \begin{aligned}
   \Prob \bigl( C_m \geq n(m) + (1 + \tau) (n(m-1)  + 1) \bigr)  &  \leq  M_\tau  e^{- \frac{\tau^2}{6} n(m-1) }  \\
      & \leq  M_\tau    \biggl(\frac {2(m-1)}{\e}\biggr)^{ - \frac{\tau^2 c}6  (m - 1)}
  \end{aligned}
 \ee
 with $M_\tau := e^{\frac{2 c \tau^2}{3}}$.
\end{theorem}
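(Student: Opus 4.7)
The plan is to combine the two preceding lemmas in a straightforward way. First, I would rewrite the event in terms of the random quantity $s(m)$: by the definition \eqref{Cmdef}, the event $\{C_m \geq n(m) + (1+\tau)(n(m-1)+1)\}$ is the same as $\{s(m) \geq (1+\tau)(n(m-1)+1)\}$. Since $s(m)$ is a sum of the independent Bernoulli random variables $b^i_{k+1}$, $1\leq i\leq n(k)$, $1\leq k\leq m-1$, with $\Prob(b^i_{k+1} = 1) = 1/(k+1)$, Lemma \ref{chernoffbound} applies with $\bar X := \EE(s(m)) = \sum_{k=1}^{m-1} n(k)/(k+1)$.

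Next, I would feed in the two-sided control on $\bar X$ coming from Lemma \ref{Smest} applied with $m$ replaced by $m-1$. The upper bound $\bar X \leq n(m-1) + 1$ gives the inclusion
\[
  \bigl\{ s(m) \geq (1+\tau)(n(m-1)+1) \bigr\} \subseteq \bigl\{ s(m) \geq (1+\tau)\bar X \bigr\},
\]
so the small-$\tau$ form of Lemma \ref{chernoffbound} (valid for $\tau \in [0,1]$) yields
\[
  \Prob\bigl( s(m) \geq (1+\tau)(n(m-1)+1) \bigr) \leq e^{-\tau^2 \bar X/3}.
\]
The lower bound $\bar X \geq \tfrac12 n(m-1) - 2c$ then converts this into $M_\tau \, e^{-\tau^2 n(m-1)/6}$ with $M_\tau = e^{2c\tau^2/3}$, which is precisely the first inequality in the statement.

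For the second inequality, I would simply insert the explicit lower bound $n(m-1) \geq c(m-1)\bigl(\ln(2(m-1)) - \ln \e\bigr)$ from \eqref{nmdef} into the exponent, which reorganizes as $\bigl(2(m-1)/\e\bigr)^{-\tau^2 c(m-1)/6}$. The edge case $m = 1$ is trivial since $s(1)$ is an empty sum (so $C_1 = n(1)$ deterministically) and the claimed bound is vacuous there.

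There is no real obstacle here: the mechanics are just applying Lemma \ref{chernoffbound} correctly while being careful about the direction of the set inclusion (one uses the \emph{upper} bound on $\bar X$ to deduce the inclusion of events, and the \emph{lower} bound on $\bar X$ to weaken the Chernoff exponent into one depending only on $n(m-1)$). The only bookkeeping point to watch is that the constant $M_\tau$ absorbs the $-2c$ slack in the lower bound of Lemma \ref{Smest}, which is why it appears as $e^{2c\tau^2/3}$ and does not depend on $m$ or $\e$.
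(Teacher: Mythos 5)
Your proposal is correct and follows essentially the same route as the paper: rewrite the event in terms of $s(m)$, apply Lemma \ref{chernoffbound} to the independent Bernoulli variables $b^i_{k+1}$, use the upper bound of Lemma \ref{Smest} (with $m$ replaced by $m-1$) for the event inclusion and its lower bound to replace $\EE(s(m))$ in the exponent, then insert the explicit form of $n_\e(m-1)$. In fact you make explicit the event-inclusion step via the upper bound of Lemma \ref{Smest}, which the paper's own proof leaves implicit.
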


\begin{proof}
We apply Proposition \ref{chernoffbound} to the independent variable $b^i_k$ which appear in $s(m)$,
which gives 
\be
\Prob\bigl(C_m\geq n(m)+ (1+\tau) \E(s(m)) \bigr)=\Prob\bigl(s(m)\geq (1+\tau) \E(s(m)) \bigr)\leq e^{-\tau^2 \E(s(m))/3}.
\ee
The result follows by using the lower bound on $\EE(s(m))$ in Lemma \ref{Smest}.
\end{proof}
\noindent

With the choice $n(m)=n_\e(m)$ we are ensured that, for each value of $m$ separately,
 the failure probability is bounded by $\varepsilon$. When the intermediate results in each step are used to drive an adaptive selection of the sequence of basis functions, however, it will typically be of interest to ensure the stronger uniform statement that with high probability, $\norm{\bG_m - \bI}_2 \leq  \frac12$ for \emph{all} $m$.
To achieve this, we now consider a slight modification of the above results with $m$-dependent choice of failure probability to ensure that $\bG_m$ remains well-conditioned with high probability jointly for all $m$. We define the sequence of failure probabilities
\be\label{emdef}
 \e(m) = \frac{6 \e_0}{(\pi m)^2}, \quad m \geq 1,
\ee
for a fixed $\e_0 \in ]0,1[$, and now analyze the repeated application of Algorithm \ref{mixturestep}, now using 
\be\label{defNm}
n(m):=n_{\e(m)}(m)=\ceil{ c\, m\, (\ln (2m) + 2\ln m - \ln (6\e_0/\pi^2)  ) }
\ee
samples for $V_m$. Note that $n(m)$ differs only by a further term of order $\log m$ from $n_{\e_0}(m)$.

\begin{lemma}
 For $m\geq 1$, let $n(m)$ be defined as in \eqref{defNm} with $\e(m)$ as in \eqref{emdef}. Then
 \be\label{eq:expectationbounds2}
   \textstyle\frac12\displaystyle n(m) - 6c   \leq  \sum_{k=1}^{m} \frac{ n(k) }{k+1} \leq  n(m)  +  1 .
 \ee
\end{lemma}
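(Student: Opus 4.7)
The plan is to repeat the two-sided estimate of Lemma \ref{Smest} with the enlarged index $n(m)=n_{\e(m)}(m)$, keeping track of how the extra $2\ln m$ term in the definition \eqref{defNm} propagates through both halves of the argument. To simplify bookkeeping I would set $A := \ln 2 + 2\ln m - \ln(6\e_0/\pi^2)$ reorganized as $n(k)= \lceil ck(3\ln k + B)\rceil$ with $B:=\ln 2 - \ln(6\e_0/\pi^2)$, so that the coefficient of $\ln k$ is now $3$ instead of $1$; this is the only structural change relative to Lemma \ref{Smest}.

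For the upper bound, I would follow \eqref{upper1}--\eqref{upper2} verbatim, splitting $\sum_k (k+1)^{-1}n(k)$ into the dominant sum $c\sum_{k=1}^m (3\ln k + B)$ and the harmless correction $\sum_{k=1}^m (k+1)^{-1}\leq \ln(m+1)$. Applying the Stirling bound \eqref{stirling} to $3\ln m!$ produces the leading term $cm(3\ln m + B) \leq n(m)$ plus a residual $-3c(m-1) + \frac{3c}{2}\ln m + \ln(m+1)$. This residual is the analogue of the quantity appearing in \eqref{upper2}; I would check that it is $\leq 1$ at $m=1$ (where it equals $\ln 2$) and that its derivative in $m$ is negative for $m\geq 1$ using $c=2/(1-\ln 2)$, giving $\leq n(m)+1$ by monotonicity.

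For the lower bound, I would again split $\frac{k}{k+1}(3\ln k + B)$ into its two pieces. Using $\frac{k}{k+1}\geq \frac12$ handles the constant term and contributes $\frac{cmB}{2}$. For the logarithmic term I would reuse inequality \eqref{lower1}, but with an extra factor of $3$: this yields $3c\sum_{k=1}^m \frac{k}{k+1}\ln k \geq \frac{3cm\ln m}{2} + \frac{3}{2} + 3cR(m)$ where $R(m)>-2$ is exactly the quantity bounded in \eqref{lower2}. Adding the two contributions gives $\sum_k (k+1)^{-1} n(k) \geq \frac12 cm(3\ln m + B) + \frac{3}{2} - 6c \geq \frac12(n(m)-1) + \frac{3}{2} - 6c \geq \frac{n(m)}{2} - 6c$, where I used $n(m) \leq cm(3\ln m+B) + 1$ from the ceiling definition.

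The only real obstacle is the monotonicity verification for the residual term in the upper bound, because the coefficients have changed; but since the sign of the derivative is governed by the same $-3c$ leading term and the constant $c$ has not been altered, this reduces to a one-line calculus check analogous to the one hidden in the paragraph after \eqref{upper2}. All other steps are a clean rescaling of the original argument where the factor $3$ in front of $\ln k$ directly converts the constant $-2c$ in \eqref{eq:expectationbounds} into $-6c$ in \eqref{eq:expectationbounds2}.
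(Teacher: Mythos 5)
Your proof is correct and follows essentially the same route as the paper's: both reduce to the Stirling bound \eqref{stirling} for the upper bound and the integral estimates \eqref{lower1}--\eqref{lower2} for the lower bound, with the same monotonicity check on the residual. The only difference is cosmetic — you fold the exponent into a single expression $3\ln k + B$ and rerun the computation of Lemma \ref{Smest} with a factor $3$, whereas the paper reuses the intermediate bounds of Lemma \ref{Smest} verbatim and treats the extra $\ln k^2$ contribution separately; both yield the constants $+1$ and $-6c$.
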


\begin{proof}
For the upper bound in \eqref{eq:expectationbounds2}, note that $n(k) \leq c k \bigr( \ln (2k) +  \ln k^2 - {\ln (6\e_0/\pi^2)} \bigr) + 1$. Using \eqref{stirling},
\be\label{bound:lnk2}
  \sum_{k=1}^m \ln k^2   \leq  m \ln m^2 + 2\bigl( 1 - m + \textstyle \frac12\displaystyle \ln m \bigr) \\
    \leq m \ln m^2.
\ee
Thus, for all $m\geq 1$,
\begin{align*}
\sum_{k=1}^m \dfrac{n(k)}{k+1}&\leq c\sum_{k=1}^m \dfrac{k}{k+1}\left(\ln(2k)-\ln\left(\frac{6\varepsilon_0}{\pi^2}\right)\right)+\sum_{k=1}^m\frac{1}{k+1}+c\sum_{k=1}^{m}\frac{k\ln k^2}{1+k}\\
&\leq c  m \bigl(  \ln 2 m  - {\ln (6\e_0/\pi^2)} \bigr)+1+ cm\ln m^2\\
&\leq n(m)  +  1,
\end{align*}
where we have used  \eqref{upper1}, \eqref{upper2} together with \eqref{bound:lnk2}. Let us deal with the lower bound. For all $m\geq 1$,
\begin{align}\label{lbnek}
\sum_{k=1}^m \dfrac{n(m)}{k+1}&\geq \sum_{k=1}^m\dfrac{1}{k+1}\left(ck \left(\ln (2k)-\ln \left( \frac{6\varepsilon_0}{\pi^2} \right)+\ln k^2\right)\right)\nonumber\\
&\geq \frac{1}{2}\left(cm\left(\ln(2m)-\ln\left(\frac{6\varepsilon_0}{\pi^2}\right) \right)+1\right)-2c+\sum_{k=1}^mc\frac{k}{k+1}\ln k^2.
\end{align}
Moreover using \eqref{lower1} and \eqref{lower2}
\[
  c \sum_{k=1}^m \frac{k}{k+1}\ln k^2 \geq 2c \left(\frac{m}{2}\ln m+\frac{1}{2c}-2\right).
\]
Thus, combining the previous bound together with \eqref{lbnek} and the definition of $n(m)$ concludes the proof of the lemma.
\end{proof}
\noindent
As a consequence, analogously to \eqref{costexpectationbound} we have 
\be\label{mean2}
  \EE(C_m) \leq n(m) + n(m-1) + 1.
\ee
Using the above lemma as in Theorem \ref{thm1} combined with a union bound, we arrive at the following
uniform stability result.

\begin{theorem}\label{thm2}
Let $\e(m)$ be defined as in \eqref{emdef}. Then applying Algorithm 1 with $n(m)$ as in \eqref{defNm}, one has
\[ 
    \Prob \bigl( \exists m\in \N \colon  \norm{\bG_m - \bI}_2 \geq \textstyle\frac12\displaystyle \bigr) \leq \varepsilon_0 , 
\]
and for any $\tau \in [0,1]$ and all $m\geq 1$, the random variable $C_m$ satisfies
 \be\begin{aligned}
   \Prob \bigl( C_m \geq n(m) + (1 + \tau) (n(m-1)  + 1) \bigr)   & \leq  M_\tau  e^{- \frac{\tau^2}{6} n(m-1) }  \\
      & \leq  M_\tau   \biggl(\frac {\pi^2  (m-1)}{3\e_0}\biggr)^{ - \frac{\tau^2 c}2  (m - 1)} 
       \end{aligned}
 \ee
 with $M_\tau := e^{2 c \tau^2}$.
\end{theorem}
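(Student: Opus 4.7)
The plan is to combine Theorem \ref{thm:ncond} with a union bound over $m$ for the uniform stability claim, and to essentially repeat the Chernoff argument from Theorem \ref{thm1} using the preceding lemma in place of Lemma \ref{Smest} for the cost tail bound.

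For the uniform stability claim, observe that by \eqref{defNm}, $n(m) = n_{\e(m)}(m)$ satisfies condition \iref{ncondition} with $\e$ replaced by $\e(m)$, so Theorem \ref{thm:ncond} gives $\Prob(\norm{\bG_m - \bI}_2 \geq 1/2) \leq \e(m)$ for each $m\geq 1$. A union bound over $m$ then yields
\[
  \Prob\bigl(\exists m \in \N : \norm{\bG_m - \bI}_2 \geq 1/2\bigr) \leq \sum_{m\geq 1} \e(m) = \frac{6\e_0}{\pi^2} \sum_{m\geq 1} \frac{1}{m^2} = \e_0,
\]
where the prefactor in \eqref{emdef} was calibrated precisely so that this sum equals $\e_0$ via the Basel identity.

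For the tail estimate on $C_m$, I would follow the proof of Theorem \ref{thm1} line for line. Writing $C_m = n(m) + s(m)$ with $\EE(s(m)) = \sum_{k=1}^{m-1} n(k)/(k+1)$, the upper bound of the preceding lemma gives $\EE(s(m)) \leq n(m-1) + 1$, so the event $\{C_m \geq n(m) + (1+\tau)(n(m-1)+1)\}$ is contained in $\{s(m) \geq (1+\tau)\EE(s(m))\}$. Since $s(m)$ is a sum of independent Bernoulli variables $b^i_{k+1}$, Lemma \ref{chernoffbound} applies and gives $\Prob(s(m) \geq (1+\tau)\EE(s(m))) \leq e^{-\tau^2 \EE(s(m))/3}$. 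Plugging in the lower bound $\EE(s(m)) \geq \frac{1}{2} n(m-1) - 6c$ promotes this to $e^{2c\tau^2} e^{-\tau^2 n(m-1)/6}$, producing the first inequality with $M_\tau = e^{2c\tau^2}$; note that the different constant compared to Theorem \ref{thm1} reflects the $-6c$ rather than $-2c$ appearing in the lower bound of the new lemma.

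The remaining step is to express $e^{-\tau^2 n(m-1)/6}$ in the stated closed form by substituting the explicit definition \eqref{defNm} of $n(m-1)$. I expect this to be the only mildly delicate part: one must recombine the logarithmic contributions $\ln(2(m-1))$, $2\ln(m-1)$ and $\ln(\pi^2/(6\e_0))$ in such a way that the exponent lands in the compact form involving $\ln(\pi^2(m-1)/(3\e_0))$ with prefactor $\tau^2 c(m-1)/2$, but this amounts to routine manipulation of the logarithms and estimation of the constants and is not conceptually difficult.
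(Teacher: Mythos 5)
Your proposal follows exactly the route the paper intends for Theorem \ref{thm2}: apply Theorem \ref{thm:ncond} with $\e$ replaced by $\e(m)$, take a union bound using $\sum_{m\geq 1} 6\e_0/(\pi m)^2 = \e_0$, and rerun the Chernoff argument of Theorem \ref{thm1} with the bounds $\frac12 n(m-1)-6c \leq \EE(s(m)) \leq n(m-1)+1$ from the preceding lemma, which correctly produces $M_\tau = e^{2c\tau^2}$ and the first displayed inequality. One caveat on the step you defer as ``routine'': substituting \eqref{defNm} gives $n(m-1) \geq c(m-1)\bigl(3\ln(m-1) + \ln(\pi^2/(3\e_0))\bigr)$, hence $e^{-\tau^2 n(m-1)/6} \leq \bigl(\pi^2 (m-1)^3/(3\e_0)\bigr)^{-\tau^2 c (m-1)/6}$, whereas matching the paper's stated form with base $\pi^2(m-1)/(3\e_0)$ and exponent prefactor $\tau^2 c/2$ would require $\ln(\pi^2/(3\e_0)) \geq 3\ln(\pi^2/(3\e_0))$, which fails for every $\e_0 \in \,]0,1[$; so the second displayed bound appears to be a typo in the paper rather than something your log manipulations could recover, and your argument is complete once that line is restated in the corrected form.
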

\noindent
In summary, applying Algorithm \ref{mixturestep} successively to generate the samples $S_1, S_2,\ldots$, we can ensure that $\norm{\bG_m - \bI}_2 \leq \frac12$ holds uniformly for all steps with probability at least $1 - \e_0$. The corresponding total costs for generating $S_1,\ldots, S_m$ can exceed a fixed multiple of $n(m)$ only with a probability that rapidly approaches zero as $m$ increases.

\begin{remark}
Algorithm \ref{mixturestep} and the above analysis can be adapted to create samples from $\mu_{m+q}$, $q\geq 2$, using those for $\mu_m$, which corresponds to adding $q$ basis functions in one step.
In this case,
\be
  \mu_{m+q} = \frac{m}{m+q}\mu_m + \frac{q}{m+q} \sum_{j=m+1}^{m+q} \frac1q \sigma_{j},
\ee
where samples from $\sum_{j=m+1}^{m+q} \frac1q \sigma_{j}$ can be obtained by mixture sampling as in \eqref{mixturedirect}.
\end{remark}

\section{Numerical illustration}

In our numerical tests, we consider two different types of orthonormal bases and corresponding target measures $\rho$ on $D\subseteq \R$:
\begin{enumerate}[(i)]
\item On the one hand, we consider the case where $D$ is equal to $\mathbb{R}$, $\rho$ is the standard Gaussian measure on $\mathbb{R}$ and $V_m$ the vector space spanned by the Hermite polynomials normalized to one with respect to the norm $\|.\|$ up to degree $m-1$, for all $m\geq 1$. This case is
an instance of polynomial approximation method where the  function $u$ and its approximants 
from $V_m$ might be unbounded in $L^\infty$, as well as the
Christoffel function $k_m(x):=\sum_{j=1}^{m}\left|H_{j-1}(x)\right|^2$.

\item On the other hand, we consider the case where $D=[0,1]$ with $\rho$ the uniform measure on $[0,1]$ and the approximation spaces $(V_m)_{m\geq 1}$ are generated by Haar wavelets refinement. The Haar wavelets are
of the form $\psi_{l,k}=2^{l/2}\psi(2^l-k)$ with $l\geq 0$ and $k=0,..., 2^l-1$, and $\psi:=\Chi_{[0,1/2[}-\Chi_{[1/2,1[}$. 
In adaptive approximation, the spaces
$V_m$ are typically generated by including as the scale level $l$ grows, the values of $k$ such that the coefficient of $\psi_{l,k}$
for the approximated function is expected to be large. This can be described by growing a finite tree 
within the hierarchical structure induced by the dyadic indexing of the Haar wavelet family: the indices $(l+1,2k)$ or $(l+1,2k+1)$
can be selected only if $(l,k)$ has already been. In our experiment, we 
generate the spaces $V_m$ by letting such a tree grow at random, starting from $V_1 = {\rm span}\{ \psi_{0,0}\}$. This 
selection of $(V_m)_{m\geq 1}$ is done once and used for all further tests.
The resulting sampling measures $(\mu_m)_{m\geq 1}$ exhibit the local refinement behavior 
of the corresponding approximation spaces $(V_m)_{m\geq 1}$.
\end{enumerate}

As seen further, although the spaces and measures are quite different, the cost of 
the sampling algorithm behaves similarly in these two cases. While we have used univariate domains $D$
for the sake of numerical simplicity, we expect a similar behaviour in multivariate cases, since
our results are also immune to the spatial dimension $d$. As already mentioned, the sampling method
is then facilitated when the functions $\vp_j$ are tensor product functions and $\rho$ is
a product measure.

\begin{figure}
\begin{tabular}{cc}
	\includegraphics[width=7.5cm]{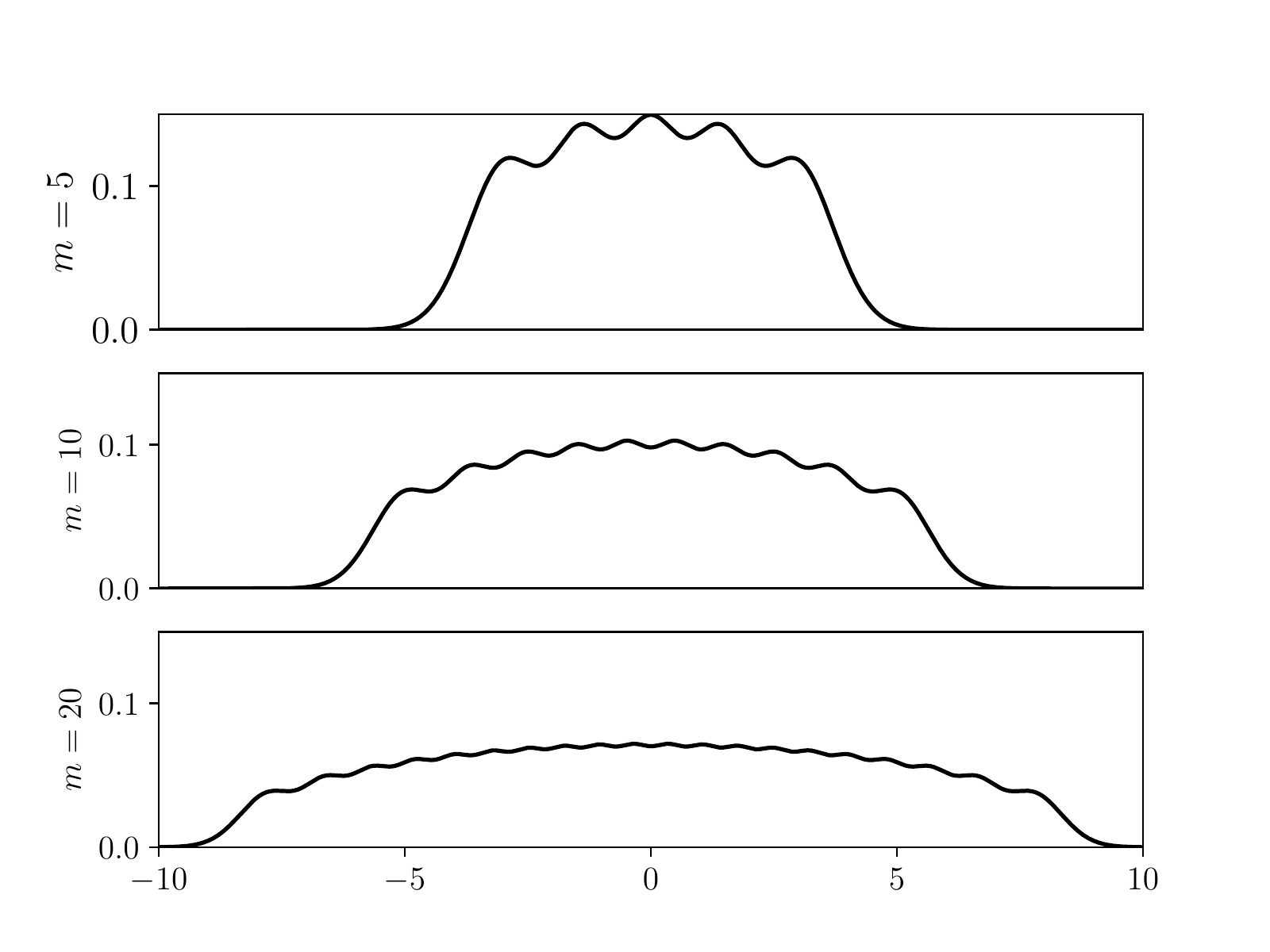} & \includegraphics[width=7.5cm]{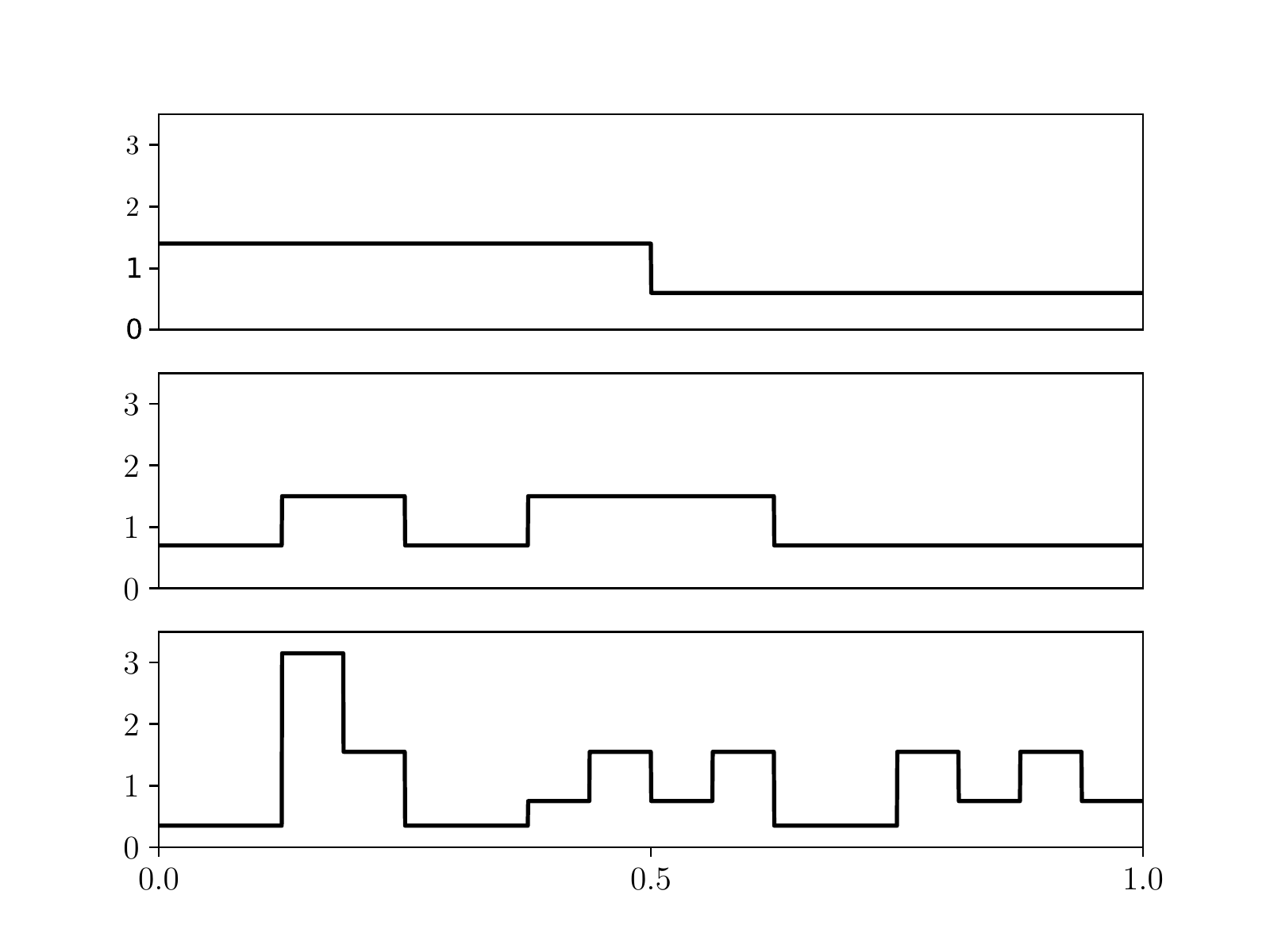} \\
	\small (a)  Hermite polynomials &\small  (b) Haar wavelets
	\end{tabular}
	\caption{Sampling densities $\mu_m$ for (a) Hermite polynomials of degrees $0,\ldots,m-1$, (b) subsets of Haar wavelet basis selected by random tree refinement.}\label{figsmp}
\end{figure}

The sampling measures $\mu_m$ are shown in Figure \ref{figsmp}. In  case (ii), the measures $\sigma_j$ are uniform measures on dyadic intervals in $]0,1[$, from which we can sample directly, using $\mathcal{O}(1)$ operations per sample. In case (i), we have instead $d\sigma_j = \abs{H_{j-1}}^2 d\rho$. Several strategies for sampling from these densities are discussed in \cite{CM1}. For our following tests, we use inverse transform sampling: we draw $z$ uniformly distributed in $[0,1]$, and obtain a sample $x$ from $\sigma_j$ by solving $\Phi_j(x):=\int_{-\infty}^x \abs{H_{j-1}}^2 d\rho = z$. To solve these equations, in order to ensure robustness, we use a simple bisection scheme. Each point value of $\Phi_j$ can be obtained using $\mathcal{O}(j)$ operations, exploiting the fact that for $j\geq 2$,
\[
  \Phi_j = \Phi-\sum_{k = 1}^{j-1} \frac{H_k H_{k-1}}{\sqrt{k}} g,
\]
where $g$ is the standard Gaussian density and $\Phi$ its cumulative distribution function.
The bisection thus takes $\mathcal{O}(j \log \epsilon)$ operations to converge to accuracy $\epsilon$. For practical purposes, the sampling for case (i) can be accelerated by precomputation of interpolants for $\sigma_j$.

In each of our numerical tests, we consider the distributions of $\kappa(\bG_m)$ and $C_m$ resulting from the sequential generation of  sample sets $S_m$ for $m=1,\ldots,50$ by Algorithm \ref{mixturestep}. In each case, the quantiles of the corresponding distributions are estimated from 1000 test runs. Figure \ref{fig1nherm} shows the results for Algorithm \ref{mixturestep} in case (i) with $n(m) = n_\e(m)$ as in \eqref{defstdn}, where we choose $\e = 10^{-2}$. We know from Theorem \ref{thm:ncond} that, for each $m$, one has $\kappa(\bG_m)\leq 3$ with probability greater than $1-\e$. In fact, this bound appears to be fairly pessimistic, since no sample with $\kappa(\bG_m)> 3$ is encountered in the present test. In Figure \ref{fig1nherm}(b), we show the ratio $C_m / n_\e(m)$. Recall that $C_m$ is defined in \ref{Cmdef} as the total number of samples used in the repeated application of Algorithm \ref{mixturestep} to produce $S_1,\ldots,S_m$, and thus $C_m / n_\e(m)$ provides a comparison to the costs of directly sampling $S_m$ from $\mu_m$. The results are in agreement with Theorems \ref{thm:mean} and \ref{thm1}, which show in particular that $C_m < 2 n_\e(m)$ with very high probability.
\begin{figure}
\begin{tabular}{cc}
	\includegraphics[width=8cm]{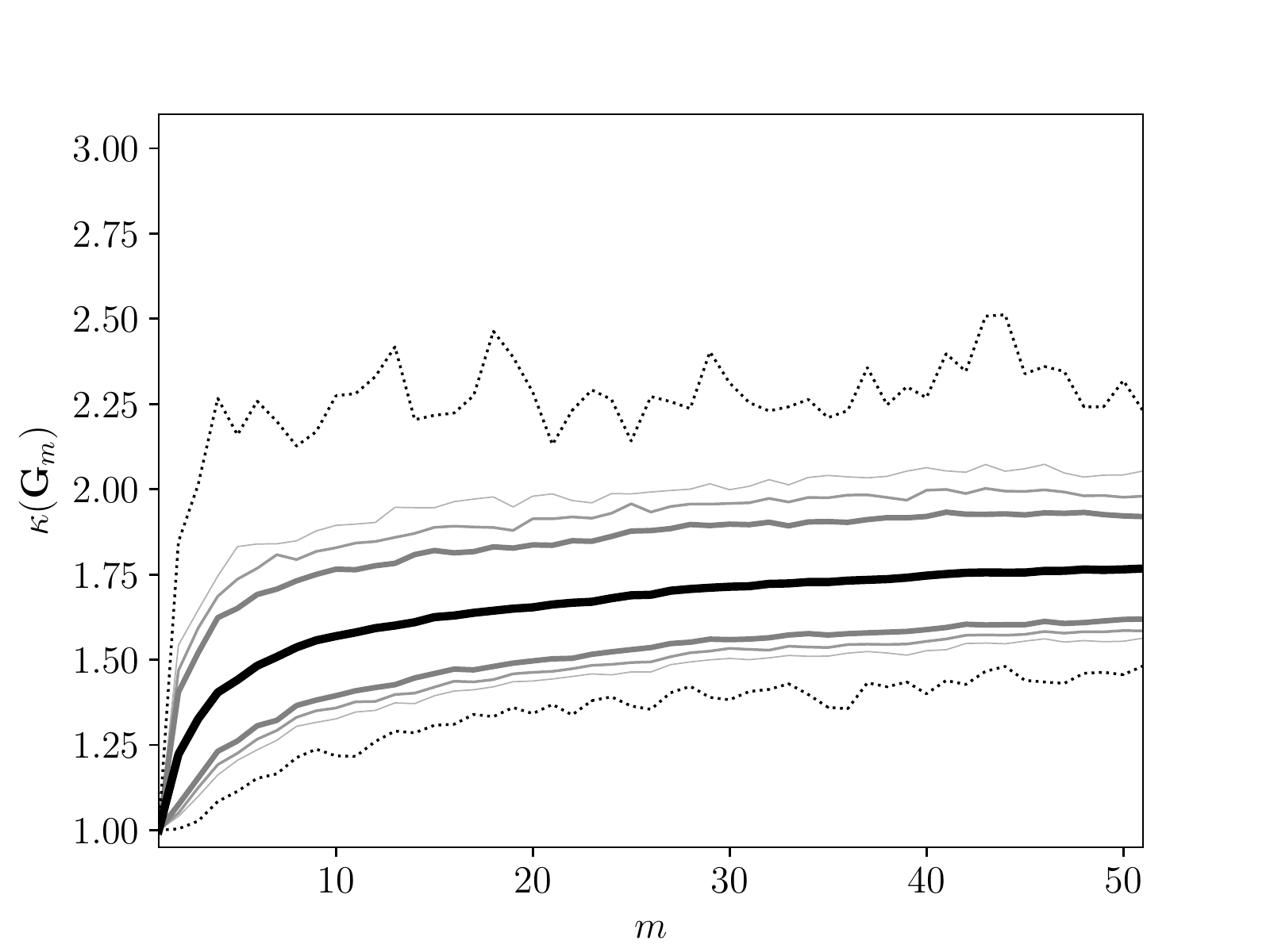} & \includegraphics[width=8cm]{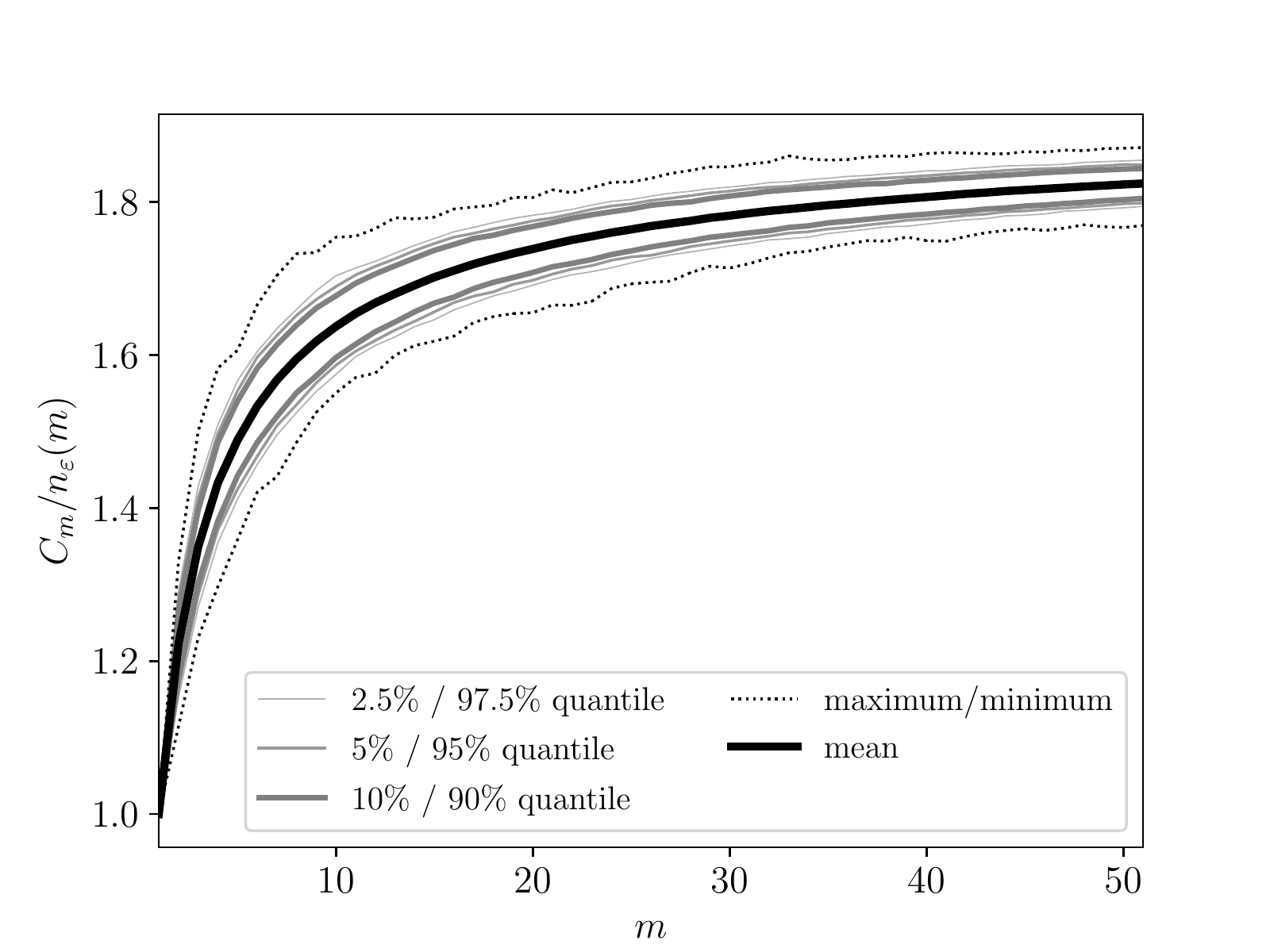} \\
	\small (a)  Gramian condition numbers & \small (b) Cost comparison
	\end{tabular}
	\caption{Results for Algorithm \ref{mixturestep} with $n(m) = n_\e(m)$ as in \eqref{defstdn}, $\e = 10^{-2}$, applied to Hermite polynomials of degrees $0,\ldots,m-1$.}\label{fig1nherm}
\end{figure}

Using the same setup with $n(m) = n_{\e(m)}(m)$ as in \eqref{defNm}, where $\e_0 = 10^{-2}$, leads to the expected improved uniformity in $\kappa(\bG_m)$. The corresponding results are shown in Figure \ref{fig1neherm}, with sampling costs that are in agreement with \eqref{mean2} and Theorem \ref{thm2}. Since the effects of replacing $n_{\e}(m)$ by $n_{\e(m)}(m)$ are very similar in all further tests, we only show results for $n(m) = n_{\e}(m)$ with $\e = 10^{-2}$ in what follows.
\begin{figure}
\begin{tabular}{cc}
	\includegraphics[width=7.5cm]{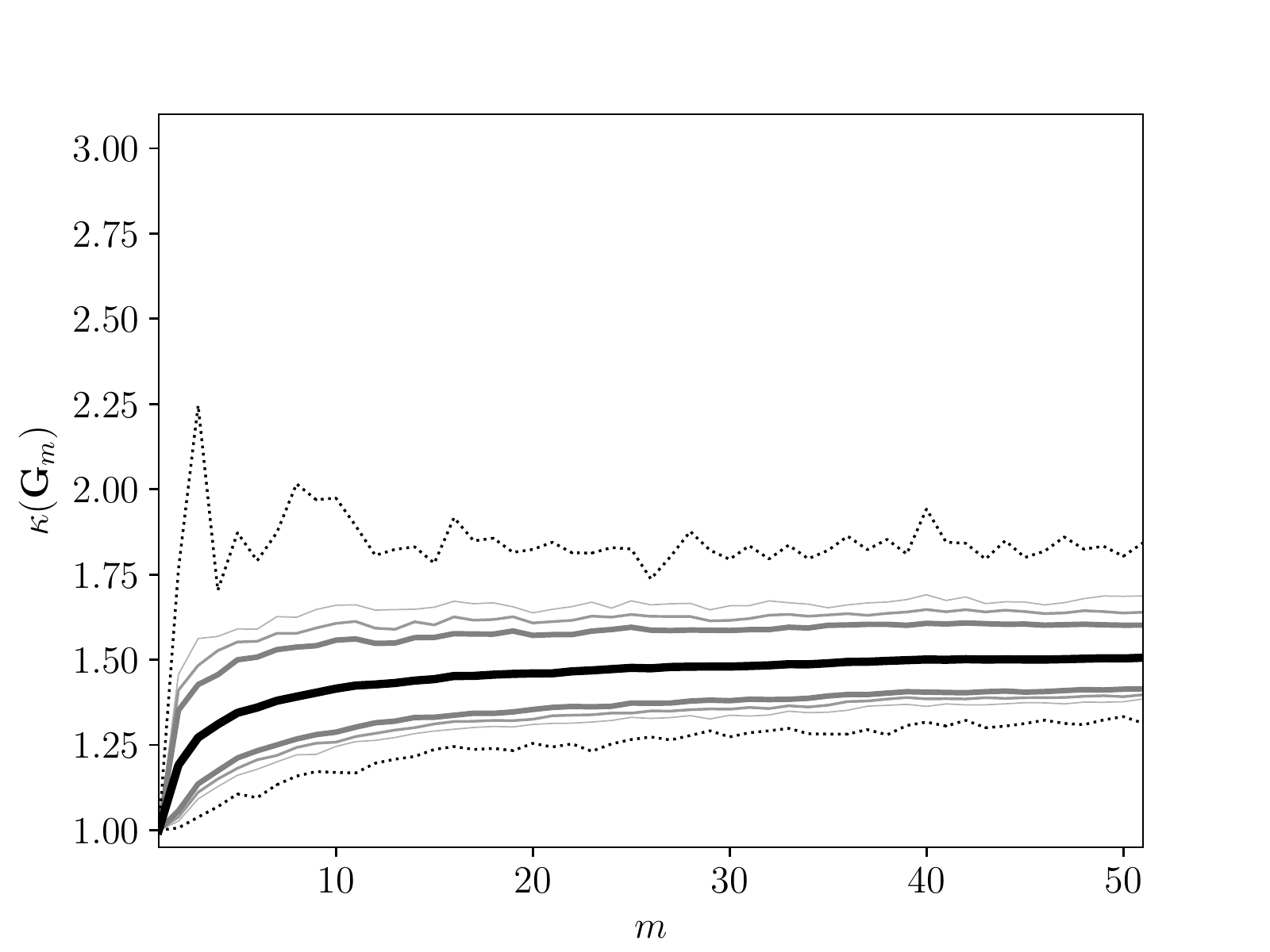} & \includegraphics[width=7.5cm]{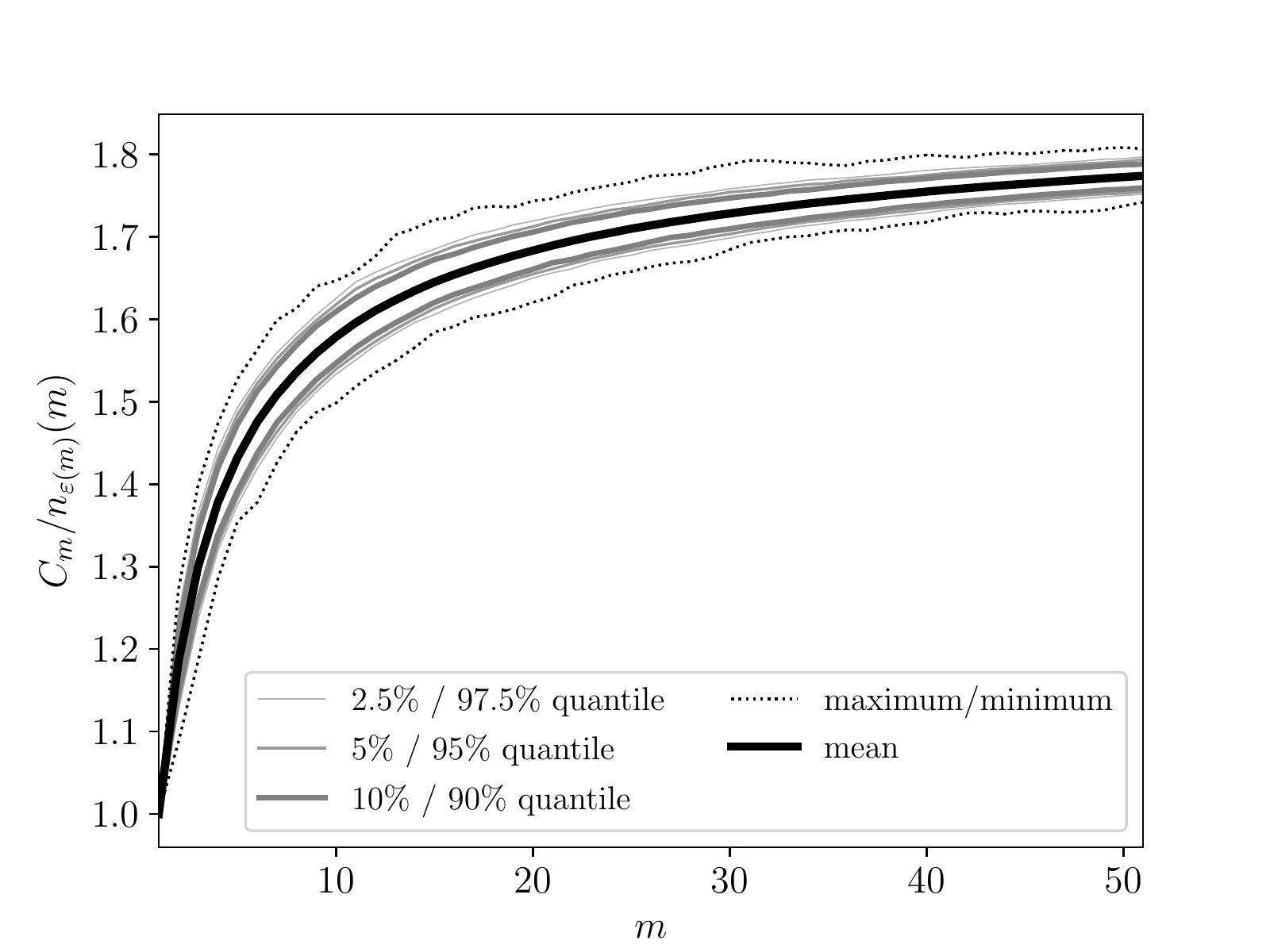}\\
	\small (a)  Gramian condition numbers & \small (b) Cost comparison
	\end{tabular}
	\caption{Results for Algorithm \ref{mixturestep} with $n(m) = n_{\e(m)}(m)$ as in \eqref{defNm}, $\e_0 = 10^{-2}$, applied to Hermite polynomials of degrees $0,\ldots,m-1$.}\label{fig1neherm}
\end{figure}

While the simple scheme in Algorithm \ref{mixturestep} already ensures near-optimal costs with high probability, there are some practical variants that can yield better quantitative performance.
A first such variant is given in Algorithm \ref{mixturestep2}. Instead of deciding for each previous sample separately whether it will be re-used, here a queue of previous samples is kept, from which these are extracted in order until the previous sample set $S_m$ is exhausted. Clearly, the costs of this scheme are bounded from above by those of Algorithm \ref{mixturestep}.
\begin{algorithm}~
\caption{Sequential sampling with sample queue}\label{mixturestep2}
\begin{algorithmic}
\vspace{-6pt}
\Require sample $S_m = \{ x_m^1,\ldots, x_m^{n(m)}\}$ from $\mu_m$
\Ensure sample $S_{m+1} = \{ x_{m+1}^1,\ldots, x_{m+1}^{n(m+1)}\}$ from $\mu_{m+1}$
\vspace{3pt}
\hrule
\vspace{6pt}
\State $j:= 1$
\For{$i=1,\ldots,n(m+1)$}
\State draw $a_i$ uniformly distributed in $\{ 1, \ldots, m+1 \}$
\If{$a_i = m+1$}
\State draw $x^i_{m+1}$ from $\sigma_{m+1}$
\ElsIf{$j \leq n(m)$}
\State  $x^i_{m+1} := x^j_m$
\State $j \gets j+1$ 
\Else
\State draw $x^i_{m+1}$ from $\mu_{m}$ by \eqref{mixturedirect}
\EndIf
\EndFor
\end{algorithmic}
\end{algorithm}

As expected, in the results for Algorithm \ref{mixturestep2} applied in case (i), which are shown in Figure \ref{fig2nherm}, we find an estimate of the distribution of $\kappa(\bG_m)$ that is essentially identical to the one for Algorithm \ref{mixturestep} in Figure \ref{fig1nherm}(a). The costs, however, are substantially more favorable than the ones in Figure \ref{fig1nherm}(b): using Algorithm \ref{mixturestep2}, the successive sampling of $S_1,\ldots, S_m$ uses only a small fraction of additional samples when compared to directly sampling only $S_m$.
\begin{figure}
\begin{tabular}{cc}
	\includegraphics[width=7.5cm]{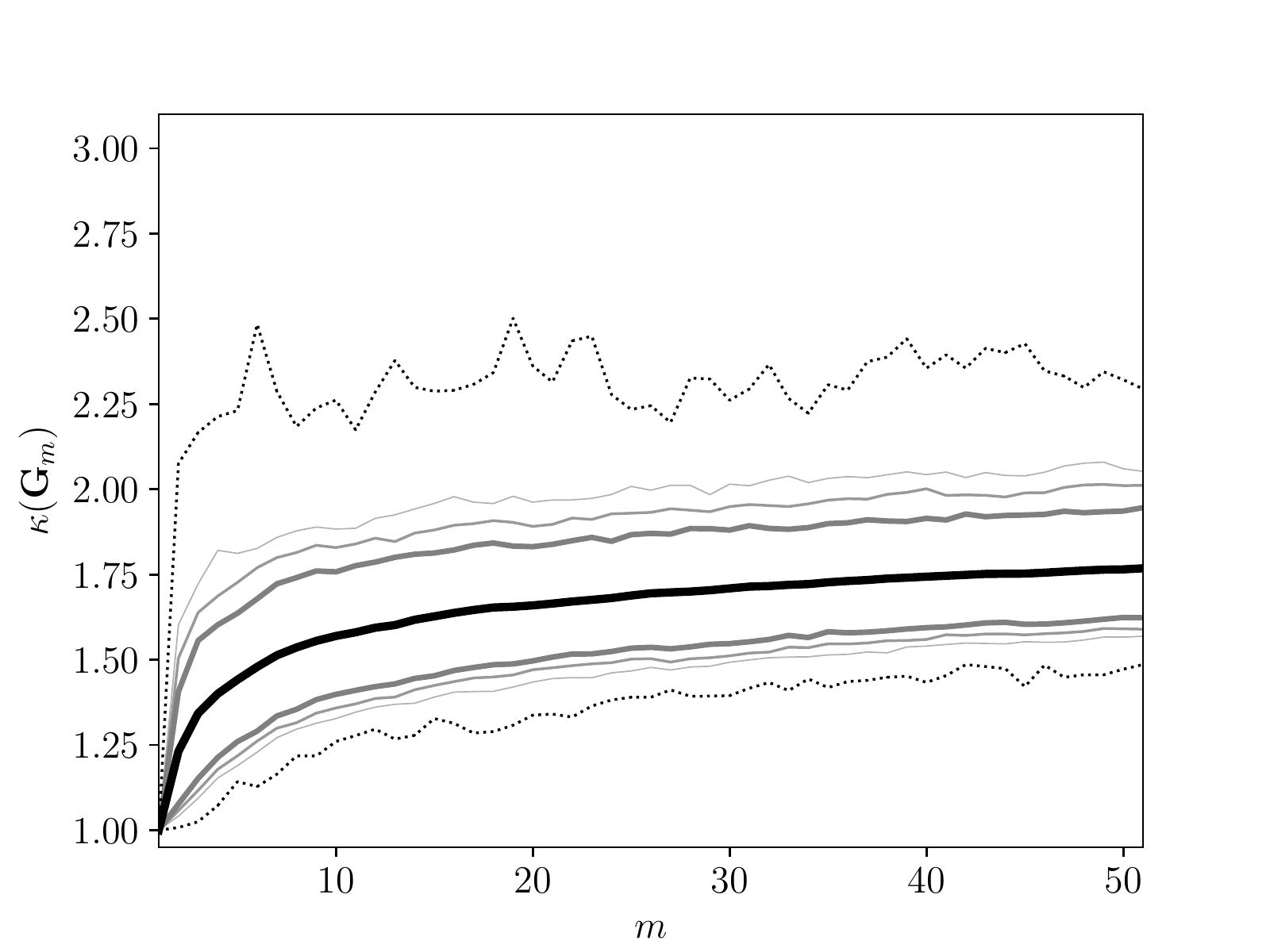} & \includegraphics[width=7.5cm]{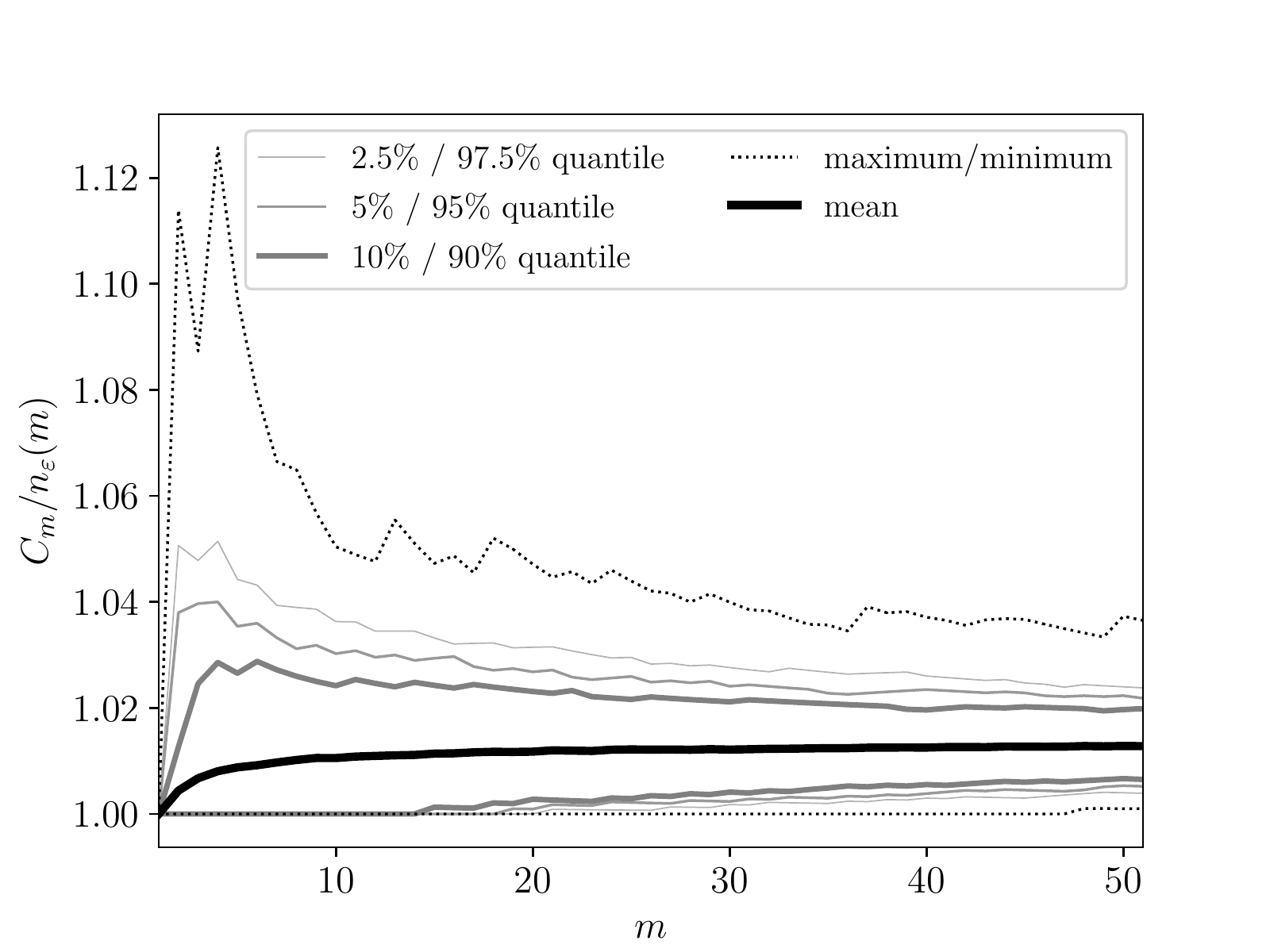}\\
	\small (a)  Gramian condition numbers & \small (b) Cost comparison
	\end{tabular}
	\caption{Results for Algorithm \ref{mixturestep2} with $n(m) = n_\e(m)$ as in \eqref{defstdn}, $\e = 10^{-2}$, applied to Hermite polynomials of degrees $0,\ldots,m-1$.}\label{fig2nherm}
\end{figure}

Figures \ref{fig1nhaar} and \ref{fig2nhaar} show the analogous comparison of Algorithms \ref{mixturestep} and \ref{mixturestep2} applied to case (ii), which leads to very similar results. This is not surprising, considering that the bounds in the general Theorem \ref{thm:ncond} on optimal least squares sampling, as well as those in \S\ref{sec:seq}, are all independent of the chosen $L^2$-space and of the corresponding orthonormal basis. Our numerical results thus indicate that one can indeed expect a rather minor effect of this choice also in practice.
\begin{figure}
\begin{tabular}{cc}
	\includegraphics[width=7.5cm]{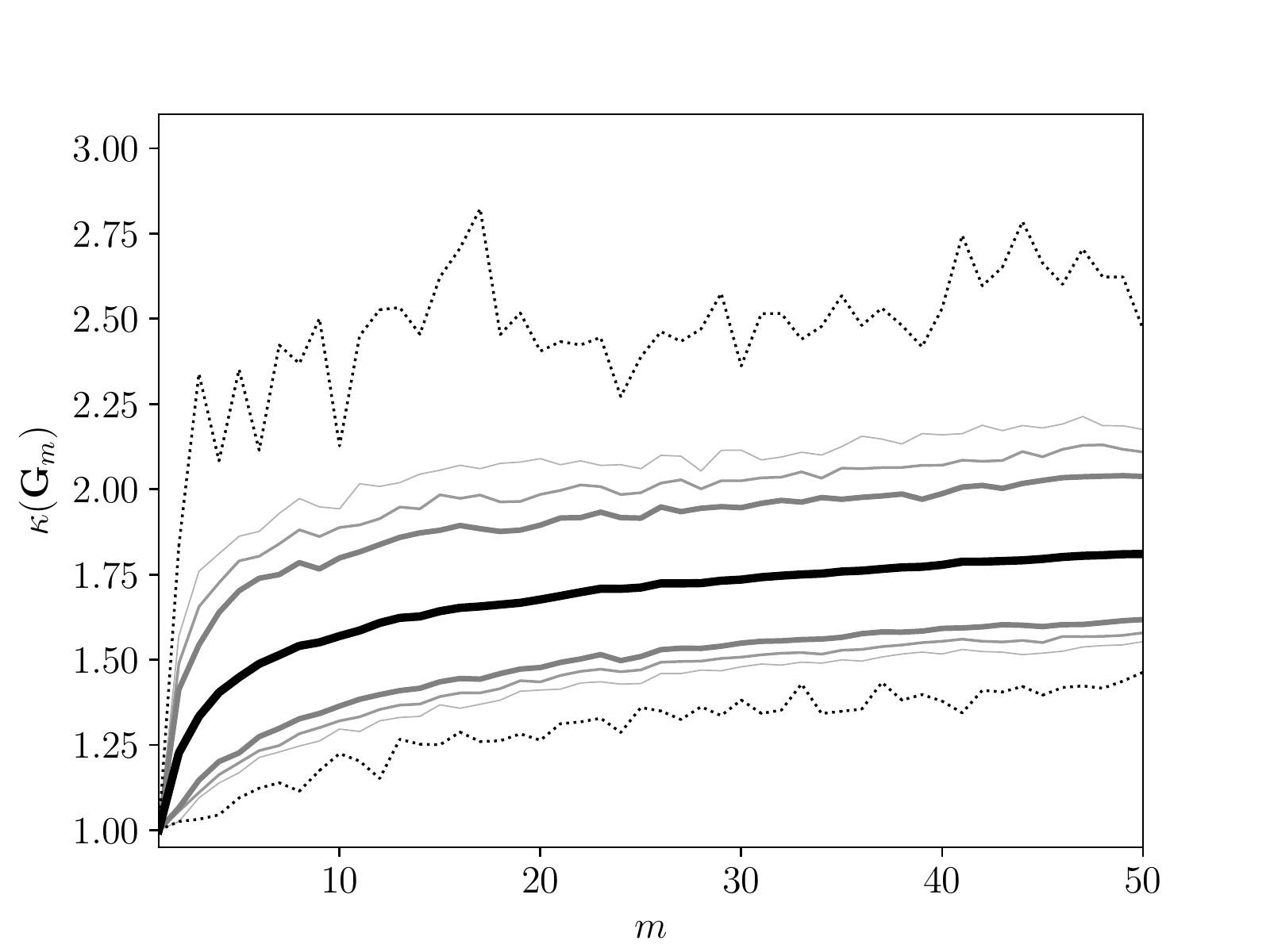} & \includegraphics[width=7.5cm]{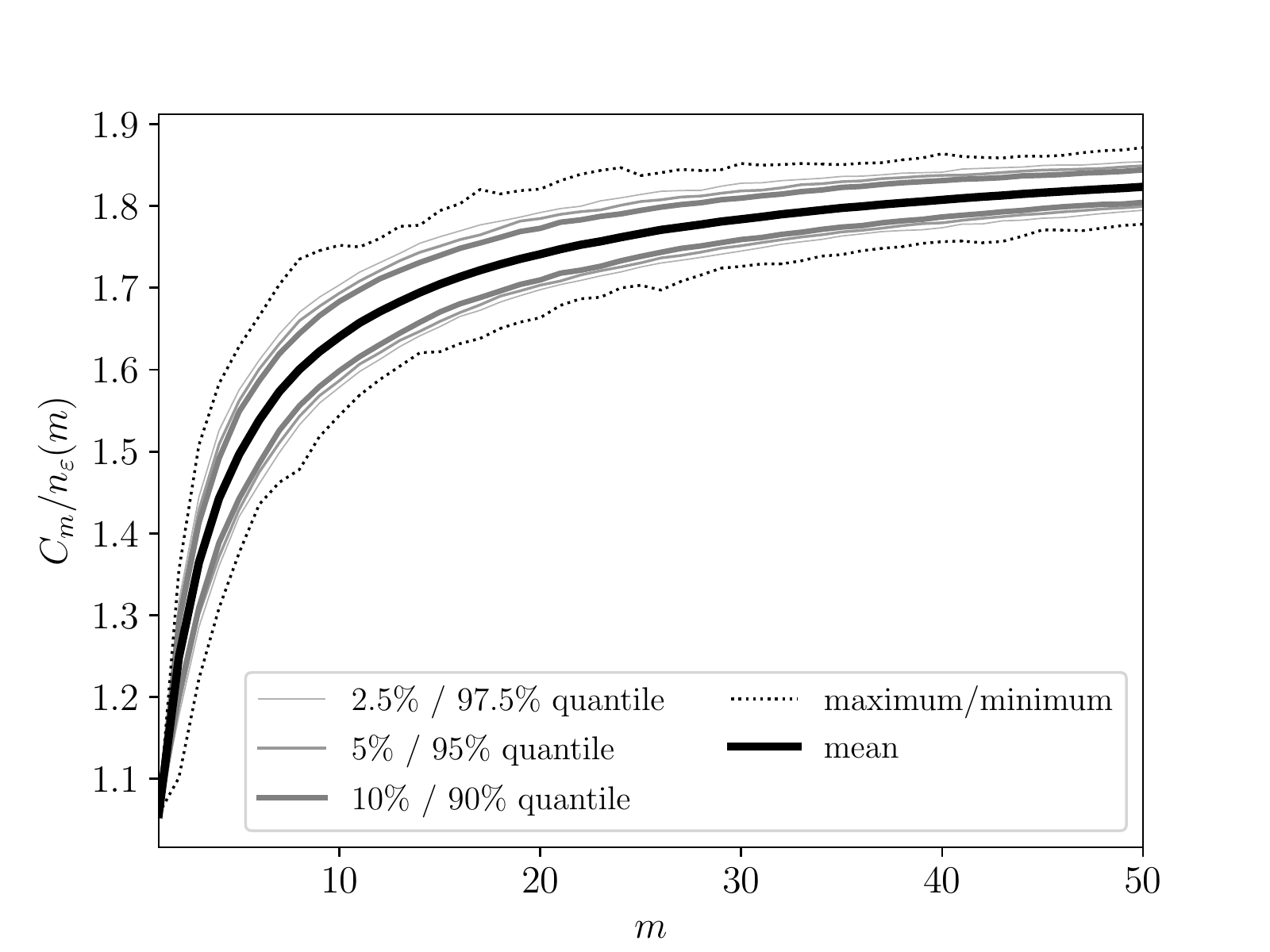}\\
	\small (a)  Gramian condition numbers & \small (b) Cost comparison
	\end{tabular}
	\caption{Results for Algorithm \ref{mixturestep} with $n(m) = n_\e(m)$ as in \eqref{defstdn}, $\e = 10^{-2}$, applied to subset of Haar basis obtained by random tree refinement.}\label{fig1nhaar}
\end{figure}

\begin{figure}
\begin{tabular}{cc}
	\includegraphics[width=7.5cm]{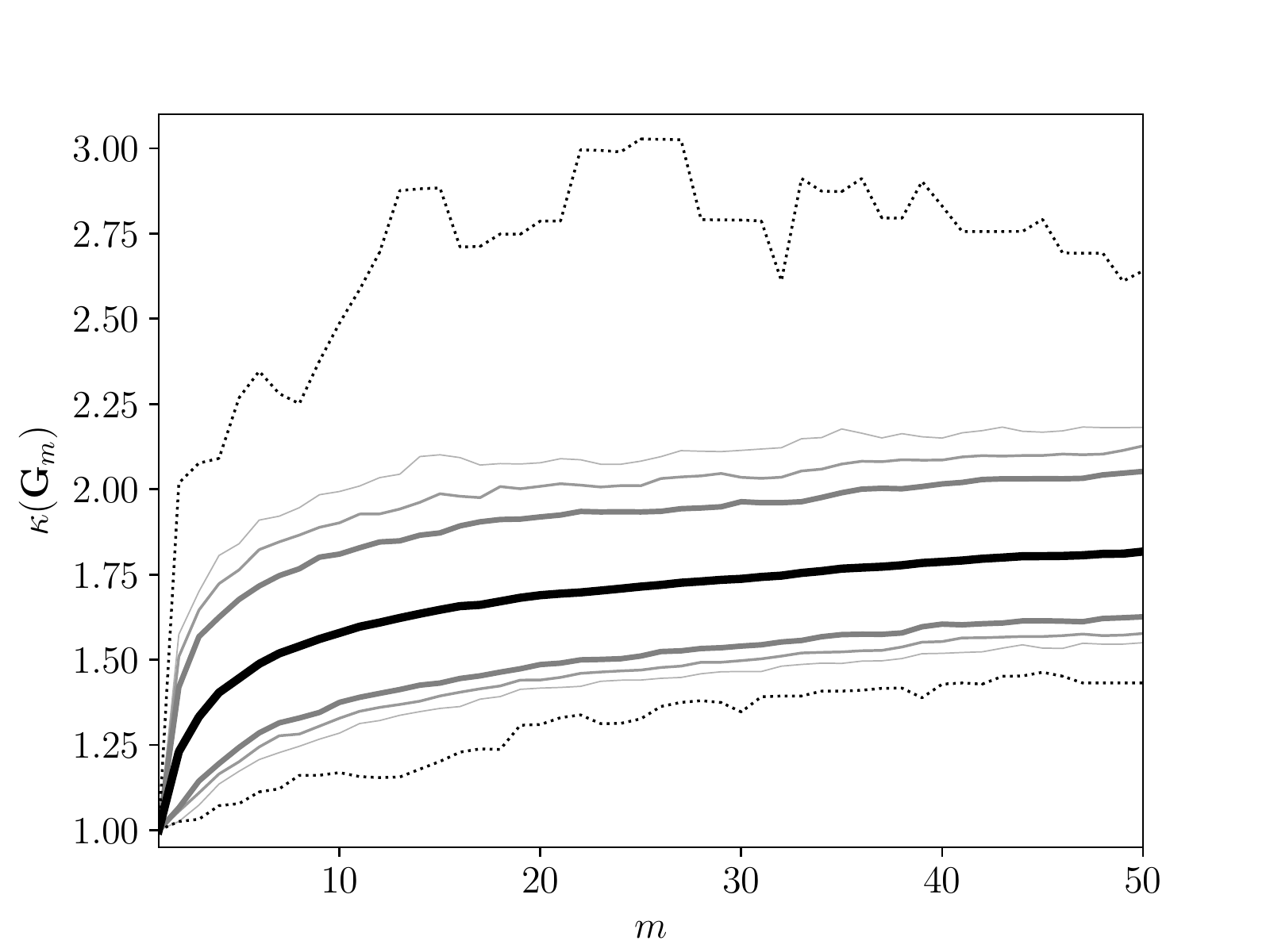} & \includegraphics[width=7.5cm]{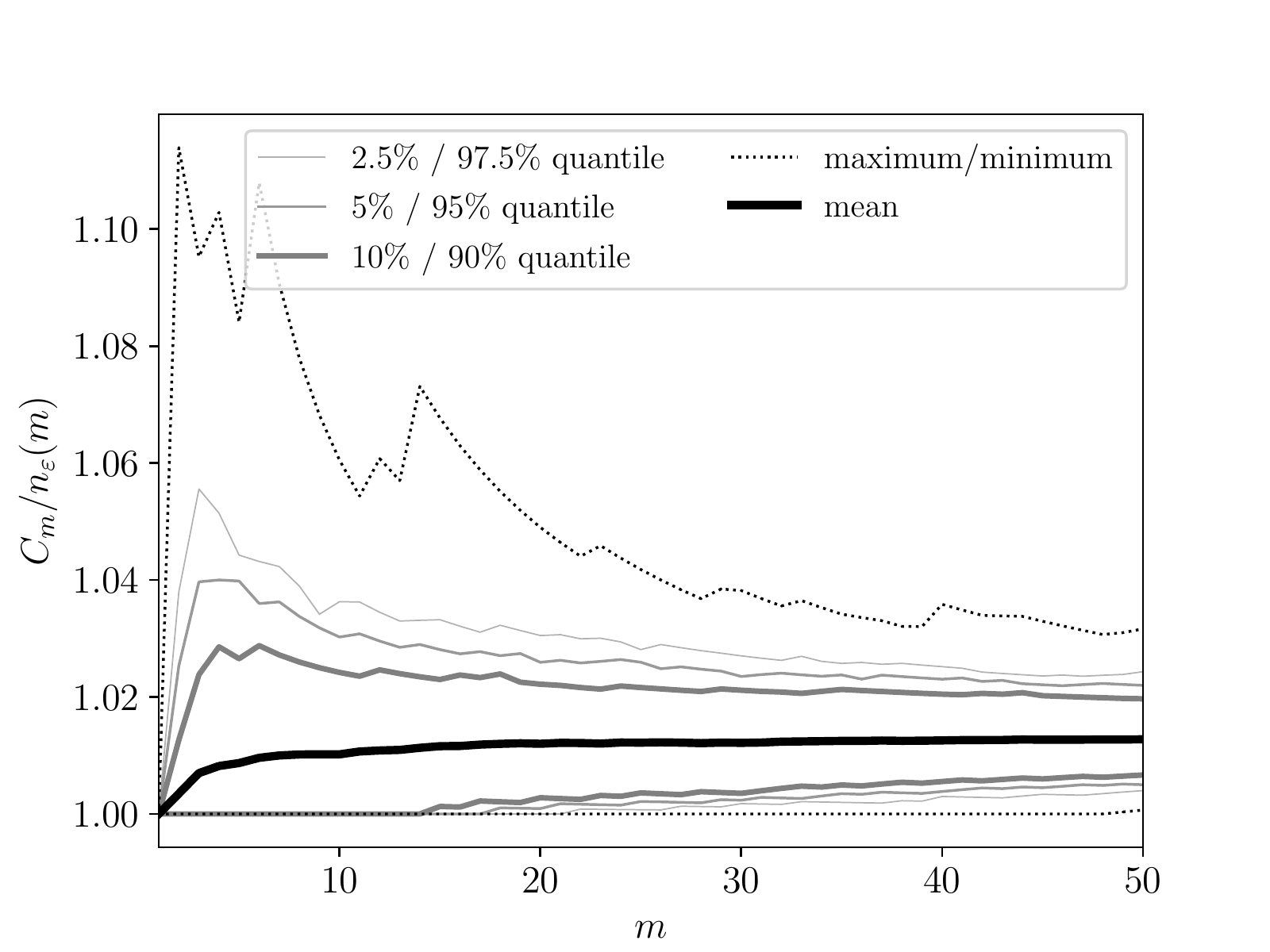} \\
	\small (a)  Gramian condition numbers & \small (b) Cost comparison
	\end{tabular}
	\caption{Results for Algorithm \ref{mixturestep2} with $n(m) = n_\e(m)$ as in \eqref{defstdn}, $\e = 10^{-2}$, applied to subset of Haar basis obtained by random tree refinement.}\label{fig2nhaar}
\end{figure}

A further algorithmic variant consists in applying the inner loop of Algorithm \ref{mixturestep2} until one
is ensured that the stability criterion $\|\bG_m-\bI\|\leq \frac 1 2$ 
is met, so that in particular $\kappa(\bG_m)\leq 3$ holds with certainty.
This procedure is described in Algorithm \ref{mixturestep3}. Note that here the size $\hat n_m$ of the sample $S_m$ is not fixed a priori. 
\begin{algorithm}~
\caption{Sequential sampling with guaranteed condition number}\label{mixturestep3}
\begin{algorithmic}
\vspace{-6pt}
\Require sample $S_m = \{ x_m^1,\ldots, x_m^{\hat n_m}\}$ from $\mu_m$
\Ensure sample $S_{m+1} = \{ x_{m+1}^1,\ldots, x_{m+1}^{\hat n_{m+1}}\}$ from $\mu_{m+1}$
\vspace{3pt}
\hrule
\vspace{6pt}
\State $j:= 1$, $i:= 1$, $\lambda := 1$
\Repeat
\State draw $a_i$ uniformly distributed in $\{ 1, \ldots, m+1 \}$
\If{$a_i = m+1$}
\State draw $x^i_{m+1}$ from $\sigma_{m+1}$
\ElsIf{$j \leq \hat n_m$}
\State  $x^i_{m+1} := x^j_m$
\State $j \gets j+1$ 
\Else
\State draw $x^i_{m+1}$ from $\mu_{m}$ by \eqref{mixturedirect}
\EndIf
\If{$i\geq m+1$}
\State Assemble $\tilde\bG^i_{m+1}$ according to \eqref{gramiandef} using $\{x^1_{m+1},\ldots, x^i_{m+1}\}$
\State $\lambda \gets \|\tilde\bG^i_{m+1}-\bI\|$
\EndIf
\State $i \gets i+1$ 
\Until{$\lambda \leq \frac 1 2$}
\end{algorithmic}
\end{algorithm}

Since the stability criterion is ensured with certainty by this 
third Algorithm, we only need to study the total sampling cost $C_m$. This is illustrated in
Figure \ref{figalg3} in the case (i) of Hermite polynomials.  For a direct comparison to Algorithms \ref{mixturestep} and \ref{mixturestep2}, we compare the costs to $n_\e(m)$ as before, although this value plays no role in Algorithm \ref{mixturestep3}. Figure \ref{figalg3}(a) shows that
with high probability, one has $C_m / n_\e(m) < 1$ for the considered range of $m$ and $\e=10^{-2}$, although this ratio can be seen to increase approximately logarithmically.
A closer inspection shows that $C_m$ tends to behave like $m\log^2 m$, 
as illustrated on Figure \ref{figalg3}(b). This hints that 
an extra logarithmic factor is needed for ensuring stability with certainty for all values of $m$.

\begin{figure}
\begin{tabular}{cc}
	\includegraphics[width=7.5cm]{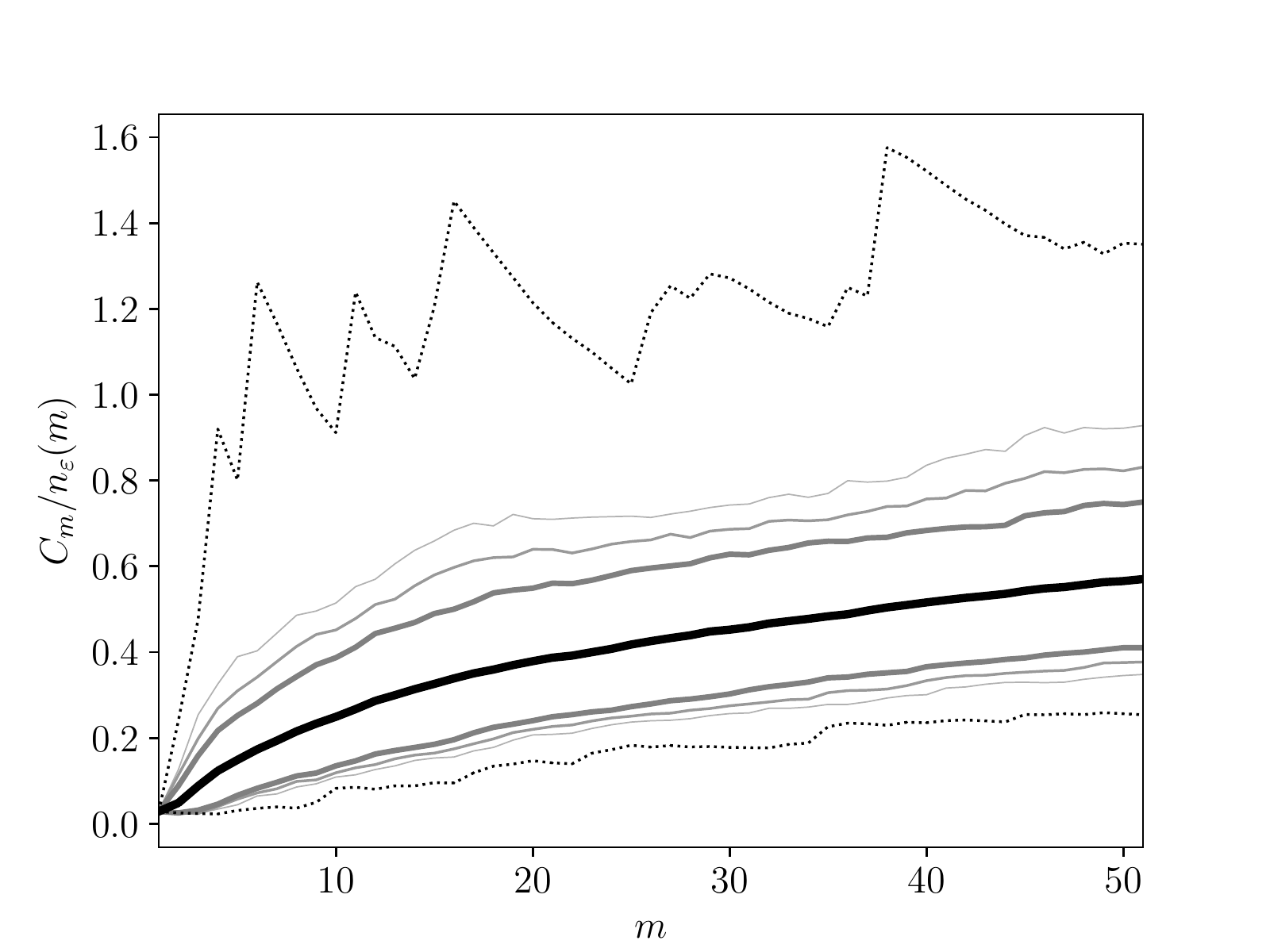} & \includegraphics[width=7.5cm]{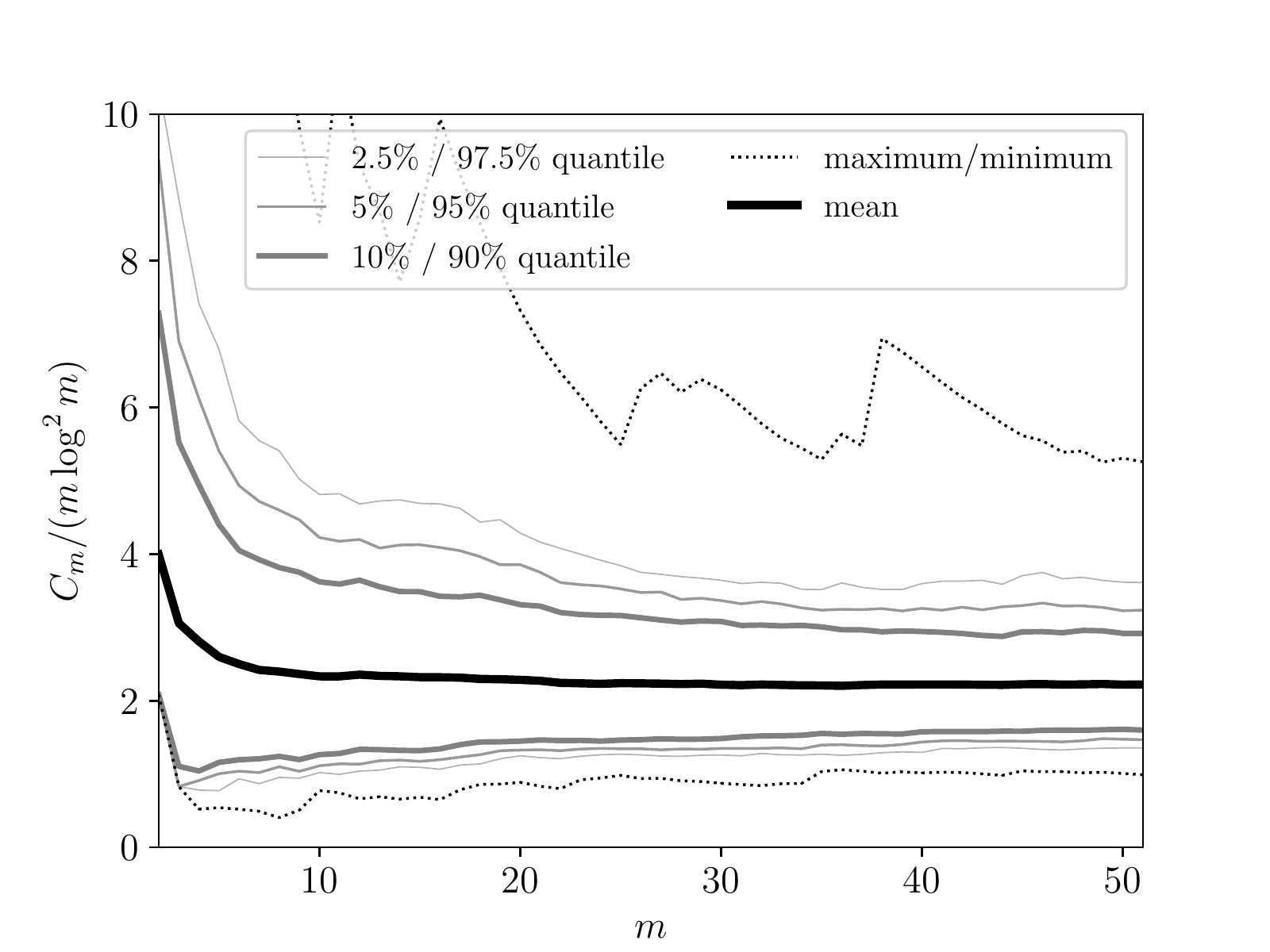} \\
	\small (a)  Ratio between $C_m$ and $n_\e(m)$ & \small (b) Ratio between $C_m$ and $m \log^2 m$.
	\end{tabular}
	\caption{Results for Algorithm \ref{mixturestep3} applied to Hermite polynomials of degrees $0,\ldots,m-1$.}\label{figalg3}
\end{figure}


\begin{thebibliography}{77}
%
\bibitem{AV} D.\ Angluin and L. G. Valiant, \emph{Fast Probabilistic algorithms for Hamiltonian circuits and mappings}, Journal of Computer and System Sciences \textbf{18}, 155-193, 1979.
%
 \bibitem{CCMNT} A. Chkifa, A. Cohen, G. Migliorati, F. Nobile, and R. Tempone,
{\it Discrete least squares polynomial approximation with random evaluations - application
to parametric and stochastic PDEs}, M2AN , \textbf{49}, 815-837, 2015.

\bibitem{Co} A. Cohen {\it Numerical analysis of wavelet methods}, 
Studies in mathematics and its applications, Elsevier, Amsterdam, 2003

\bibitem{CD} A. Cohen and R. DeVore, {\it Approximation of high-dimensional PDEs}, 
Acta Numerica, \textbf{24}, 1-159, 2015.

\bibitem{CDS}  A. Cohen, R. DeVore, and C. Schwab, 
{\it Analytic regularity and polynomial approximation of parametric and stochastic PDEs}, 
Analysis and Applications, \textbf{9}, 11-47, 2011.

\bibitem{CM1} A. Cohen and G. Migliorati, 
\emph{Optimal weighted least squares methods,} SMAI Journal of Computational Mathematics \textbf{3}, 181--203, 2017.

\bibitem{CM2} A. Cohen and G. Migliorati, {\it Multivariate approximation in downward closed polynomial spaces},
to appear in Ian Sloan's 80 Festschrift, 2018.

\bibitem{Da} W. Dahmen {\it Wavelet and multiscale methods for operator equations}, Acta Numer., 55-228, 1997.

\bibitem{De} R. DeVore {\it Nonlinear approximation}, Acta Numer., 51--150, 1998.

\bibitem{Do1} A. Doostan and J. Hampton, {\it Coherence motivated sampling and convergence analysis of least squares polynomial Chaos regression}, Computer Methods in Applied Mechanics and Engineering 290, 73-97, 2015.

\bibitem{Do2} A. Doostan and M. Hadigol, {\it Least squares polynomial chaos expansion: 
A review of sampling strategies}, Computer Methods in Applied Mechanics and Engineering, \textbf{332}, 382-407, 2018.

\bibitem{Do3} A. Doostan and J. Hampton, {\it Basis Adaptive Sample Efficient Polynomial Chaos (BASE-PC)},
preprint, arXiv:1702.01185, 2017

\bibitem{EMN} T. Erd\'elyi, A.P. Magnus and P. Nevai, {\it Generalized Jacobi weights, Christoffel functions, and Jacobi polynomials},  SIAM Journal on Mathematical Analysis, \textbf{25}, 602-614, 1994.

\bibitem{HNTW} A.L. Haji-Ali, F. Nobile, R. Tempone and S. W\"olfer {\it Multilevel weighted least squares polynomial approximation}, preprint, arXiv:1707.00026, 2017

\bibitem{HR} T. Hagerup and C.\ R\"ub, {\it A guided tour of Chernoff bounds},
Information Processing Letters \textbf{33}, 305--308, 1990.

\bibitem{JNZ} J.D. Jakeman, A. Narayan, and T. Zhou, {\it A Christoffel function weighted least squares algorithm for collocation approximations}, Math. Comp., \textbf{86}, 1913-1947, 2017.

\bibitem{MT} G. Mastroianni and V. Totik, {\it Weighted polynomial inequalities with doubling and $a_\infty$ weights},
Constructive approximation \textbf{16}, 37-71, 2000.

\bibitem{Tr} J. Tropp, {\it User-Friendly tail bounds for sums of random matrices},
Found. Comput. Math. \textbf{12}, 389-434, 2012.

\end{thebibliography}
\end{document}